\begin{document}

\title[Immersions of spheres into space forms of nonpositive
curvature]{On a class of immersions of spheres \\ into space forms of nonpositive
curvature}
\author{Pedro Z\"{u}hlke}
\subjclass[2010]{Primary: 58D10, 53C24. Secondary: 53C42, 53C40.}
\keywords{h-principle; hypersurfaces; immersions; infinite-dimensional
manifolds; principal curvatures; rigidity; sphere eversion}
\maketitle
\vspace{-11pt}

\begin{abstract} 
	Let $ M^{n+1} $ ($ n \geq 2 $) be a simply-connected space form of sectional
	curvature $ -\ka^2 $ for some $ \ka \geq 0 $, and $ I $ an interval not
	containing $ [-\ka,\ka] $ in its interior. It is known that the domain of a
	closed immersed hypersurface of $ M $ whose principal curvatures lie in $ I
	$ must be diffeomorphic to the $ n $-sphere $ \Ss^n $.  These hypersurfaces
	are thus topologically rigid. 
	
	The purpose of this paper is to show that they are also homotopically rigid.
	More precisely, for fixed $ I $, the space $ \sr F $ of all
	such hypersurfaces is either empty or weakly homotopy equivalent to the
	group of orientation-preserving diffeomorphisms of $
	\Ss^n $. An equivalence assigns to each element of $ \sr F $ a
	suitable modification of its Gauss map.  For $ M $ not simply-connected, $
	\sr F $ is the quotient of the corresponding space of hypersurfaces of the
	universal cover of $ M $ by a natural free proper action of the fundamental
	group of $ M $.
\end{abstract}



\setcounter{section}{-1}
\section{Introduction}

\begin{ucvn}
	Throughout the article, $ n\geq 2 $ is an integer and all manifolds are
	assumed to be connected, oriented and smooth, i.e., of class $ C^{\infty} $.
	Maps between manifolds are also assumed to be smooth, and sets of maps are
	furnished with the $ C^{\infty} $-topology.   
\end{ucvn}
\subsection*{Motivation} 

In 1958, S.~Smale \cite{Smale1} proved that any two immersions of the 2-sphere
$ \Ss^2 $ into euclidean space $ \E^3 $ are \tdfn{regularly homotopic}, that is,
homotopic through immersions. In particular, the inclusion $ \iota \colon \Ss^2
\inc \E^3 $ is regularly homotopic to the composition $ -\iota $ of $ \iota $
with the antipodal map of $ \Ss^2 $. In words, the sphere can be everted, or
turned inside out, without developing singularities. 

There is an obvious invariant which must be preserved by any regular homotopy,
viz., the degree of the Gauss map. However, it is not too hard to prove that 
for any immersion $ \Ss^2 \to \E^3 $, this degree must be 1, so it is not
an obstruction in this situation.  

As the Gaussian curvatures of $ \pm \iota \colon \Ss^n \to
\E^{n+1} $ are both equal to $ (-1)^n $, a natural question in this context is
whether it is possible to deform $ \iota $ into $ -\iota $ in such a way that the
Gaussian curvature does not vanish during the homotopy.  For $ n $ even
such an eversion does not exist, because the \tsl{principal} curvatures of $
\iota $ and $ -\iota $ have opposite signs.  If $ n
$ is odd, then such an eversion is possible. In fact, it will be shown 
that for odd $ n $, two immersions of $ \Ss^n $ into $ \E^{n+1} $ having
nonvanishing Gaussian curvature are homotopic through immersions of the same
type if and only if their Gauss maps are isotopic as diffeomorphisms of $ \Ss^n
$.

\subsection*{Summary of results}
Let $ M^{n+1} $ be a space form of sectional curvature $ -\ka^2 $ for
some $ \ka \geq 0 $, and let $ \Ss^n $ be equipped with its standard smooth
structure, but no pre-assigned Riemannian metric.  The purpose of this paper is
to study the homotopy type of the space $ \sr F(M;I) $ of immersions $ \Ss^n \to
M^{n+1} $ whose principal curvatures take on values in an interval $ I $
not containing $ [-\ka,\ka] $.

Let $ \te{M} $ be the universal cover of $ M $. It is shown in
\lref{L:covering} that $ \sr F(M;I) $ is a quotient of $ \sr F(\te{M};I) $ by a
free proper action of the fundamental group of $ M $, which is induced by the
action of the latter on $ \te{M} $ by deck transformations. This essentially
reduces the problem to the case where $ M $ is simply-connected.

A partial justification for considering only immersions of $ \Ss^n $, as opposed
to any other closed manifold $ N^n $, is provided by the following previously
known facts.  Suppose that either $ N $ or $ M $ is simply-connected. If $ I $
is contained in $ [-\ka,\ka] $, then no immersion $ f\colon N \to  M $ with
principal curvatures constrained to $ I $ exists; in particular, $ \sr F(M;I) $
is empty.  In contrast, if $ I $ is disjoint from or \tit{overlaps} (i.e.,
intersects but neither contains nor is contained in) $ [-\ka,\ka] $, then
such immersions exist only for $ N $ diffeomorphic to $ \Ss^n $.
A diffeomorphism $ N \to \Ss^n $ is given by
a suitable version of the Gauss map of any lift of $ f $ to the universal
cover of $ M $.  Our definition of this modified Gauss map depends on $ \ka
$ and on the position of $ I $ relative to $ [-\ka,\ka] $ (but when $ M =
\E^{n+1} $, it coincides with the usual Gauss map); see \lref{D:Gauss},
\lref{D:visual} and \lref{L:sphere}. 

If $ I $ does not contain $ [-\ka,\ka] $, $ \sr F(M;I) $ may thus be
interpreted as the space of \tsl{all} closed hypersurfaces of $ M $ with
principal curvatures in $ I $. For $ M $ not simply-connected, the same
interpretation is valid provided that any two hypersurfaces which have a common
factor are identified, where $ \bar f \colon \bar N \to M $ is called a
\tdfn{factor} of $ f \colon N \to M $ if $ f $ is the composition of $ \bar f $
with a covering map $ N \to \bar N $ (see (4.1) of \cite{Zuehlke3} for the details).

Let the space of orientation-preserving
(resp.~-reversing) diffeomorphisms of $ \Ss^n $ be denoted by $ \Diff_+(\Ss^n) $
(resp.~$ \Diff_-(\Ss^n) $).  The cornerstone of the paper, which is a
combination of \pref{P:mainh}, \pref{P:mainh2} and \pref{P:main}, is that if $ M
$ is simply-connected and $ I $ overlaps or is disjoint from $ [-\ka,\ka] $, then
\begin{equation*}
	\Phi \colon \sr F(M;I) \to \Diff_{\pm} (\Ss^n),\quad f \mapsto \te{\nu}_f
	\quad \text{and}\quad \Psi \colon \Diff_+(\Ss^n) \to \sr F(M;I),\quad g
	\mapsto f_0 \circ g
\end{equation*}
are weak homotopy equivalences. Here $ \te{\nu}_f $ denotes the modified Gauss
map of $ f $ and $ f_0 $ is an arbitrary fixed element of $ \sr F(M;I) $; the
sign $ \pm $ in the range of $ \Phi $ depends explicitly on $ I $ and $ n $.
If $ I $ is an open interval, then by invoking well-known general results on
infinite-dimensional manifolds, the word ``weak'' in these assertions can be
omitted, and it can even be concluded that $ \sr F(M;I) $ is homeomorphic to $
\Diff_+(\Ss^n) $.  As a corollary, the space of immersions, or embeddings, of $
\Ss^n $ into $ M $ having nonvanishing Gaussian curvature is homeomorphic to the
full group $ \Diff(\Ss^n) $; see \rref{R:Banach} and \cref{C:Gaussian}.  Our
main results can be summarized as follows.

\begin{thm}\label{T:main}
	Let $ M^{n+1} $ be a space form of sectional curvature $ -\ka^2 $, for some $
	\ka \geq 0 $.  Let $ I $ be an interval which either overlaps or is disjoint
	from $ [-\ka,\ka] $. Then 
	\begin{equation*}
		\Psi \colon \Diff_+(\Ss^n) \to \sr F(M;I),\quad \Psi(g) = f_0 \circ g
		\qquad (f_0 \in \sr F(M;I) \text{ arbitrary})
	\end{equation*}
	induces isomorphisms on $ \pi_i $ for all $ i \neq 1 $. For $ i=1 $, there
	is an exact sequence
	\begin{equation*}
		1 \lto{} \pi_1(\Diff_+(\Ss^n)) \iso \pi_1(\sr F(\te{M};I)) \lto{\Pi_\ast}
		\pi_1(\sr F(M;I)) \lto{} \pi_1(M) \lto{} 1
	\end{equation*}
	where $ \pi \colon \te{M} \to M $ is the universal cover of $ M $ and $ \Pi
	\colon \te{f} \mapsto \pi \circ \te{f} $.
\end{thm}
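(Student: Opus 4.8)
The plan is to derive the theorem from two facts already available: the ``cornerstone'' --- the conjunction of \pref{P:mainh}, \pref{P:mainh2} and \pref{P:main} --- which settles the case $\pi_1(M)=1$, and \lref{L:covering}, which presents $\sr F(M;I)$ as the quotient of $\sr F(\te{M};I)$ by a free proper action of $\pi_1(M)$. First I would record two harmless reductions: we may assume $\sr F(M;I)\neq\emptyset$, since the statement presupposes a choice of $f_0$, and then $\sr F(\te{M};I)\neq\emptyset$ as well, because $n\geq 2$ makes $\Ss^n$ simply connected, so every $f\in\sr F(M;I)$ lifts through $\pi$ to an immersion into $\te{M}$ with the same principal curvatures ($\pi$ being a local isometry).

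Next I would handle the simply-connected case. If $M=\te{M}$, then $\pi_1(M)=1$ and $\Pi$ is the identity, while the cornerstone makes $\Psi$ a weak homotopy equivalence; this yields at once the isomorphisms on $\pi_i$ for $i\neq 1$ and reduces the asserted sequence to the isomorphism $\pi_1(\Diff_+(\Ss^n))\iso\pi_1(\sr F(M;I))$ induced by $\Psi$. Since $\te{M}$ is itself a simply-connected space form of curvature $-\ka^2$ satisfying the same hypothesis on $I$, this argument applied to $\te{M}$ shows: for any fixed $\te{f}_0\in\sr F(\te{M};I)$ the map $\te{\Psi}\colon\Diff_+(\Ss^n)\to\sr F(\te{M};I)$, $\te{\Psi}(g)=\te{f}_0\circ g$, is a weak homotopy equivalence, hence induces a bijection on $\pi_0$ and isomorphisms on all higher homotopy groups at every basepoint. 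These are the only consequences I will use.

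The remaining, and principal, step is to descend along \lref{L:covering}, and here I expect the main obstacle: $\sr F(M;I)$ need \emph{not} be path-connected, because $\pi_0(\Diff_+(\Ss^n))$ need not vanish (it is the Kervaire--Milnor group $\Theta_{n+1}$ of homotopy $(n+1)$-spheres when $n\geq 5$), so one cannot blindly invoke the homotopy sequence of a connected covering. The observation that repairs this is that $\pi_1(M)$ acts on $\sr F(\te{M};I)$ through \emph{orientation-preserving} isometries of $\te{M}$ --- the deck transformations of the Riemannian universal cover of the oriented manifold $M$ --- and these lie in the path-connected identity component of $\mathrm{Isom}(\te{M})$. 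Choosing a path $\gamma_t$ of isometries from the identity to a given deck transformation $\gamma$ and forming $t\mapsto\gamma_t\circ\te{f}$ joins $\te{f}$ to $\gamma\cdot\te{f}$ inside $\sr F(\te{M};I)$, since composition with an isometry preserves principal curvatures. Hence for every $\te{f}$ the orbit $\pi_1(M)\cdot\te{f}$ lies in a single path-component of $\sr F(\te{M};I)$; in particular each fibre $\Pi^{-1}(f)$ is a $\pi_1(M)$-torsor contained in one path-component.

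From this I would conclude as follows. First, $\Pi$ induces a bijection on $\pi_0$: it is surjective because $\Pi$ is, and injective because a path in $\sr F(M;I)$ from $\Pi(\te{g}_0)$ to $\Pi(\te{g}_1)$ lifts to a path from $\te{g}_0$ to some $\gamma\cdot\te{g}_1$, which lies in the path-component of $\te{g}_1$. Second, fixing $f_0$ and a lift $\te{f}_0$ with $\pi\circ\te{f}_0=f_0$, the homotopy sequence of the covering $\Pi$ at $\te{f}_0$ --- whose fibre $\Pi^{-1}(f_0)$ is discrete --- shows that $\Pi_\ast$ is an isomorphism on $\pi_i$ for $i\geq 2$, while, since $\Pi^{-1}(f_0)$ sits in one path-component, the tail of that sequence is the short exact sequence
\begin{equation*}
	1 \lto{} \pi_1(\sr F(\te{M};I)) \lto{\Pi_\ast} \pi_1(\sr F(M;I)) \lto{} \pi_1(M) \lto{} 1 ,
\end{equation*}
the last map being the monodromy of the (regular) covering, valued in the deck group $\pi_1(M)$. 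Since $\Psi=\Pi\circ\te{\Psi}$, the composite $\Psi_\ast=\Pi_\ast\circ\te{\Psi}_\ast$ is a bijection on $\pi_0$ and an isomorphism on $\pi_i$ for $i\geq 2$, which is the first assertion; and prepending the isomorphism $\te{\Psi}_\ast\colon\pi_1(\Diff_+(\Ss^n))\iso\pi_1(\sr F(\te{M};I))$ from the second step to the displayed sequence produces exactly the exact sequence in the statement. Apart from the component-tracking of the third paragraph, everything here is standard covering-space theory together with bookkeeping of basepoints and the naturality of $\Pi_\ast$.
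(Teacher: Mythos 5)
Your proof is correct and follows exactly the paper's approach: factor $\Psi = \Pi \circ \te{\Psi}$ through the commutative triangle, invoke \pref{P:mainh}, \pref{P:mainh2} and \pref{P:main} to conclude that $\te{\Psi}$ is a weak homotopy equivalence, and \lref{L:covering} to conclude that $\Pi$ is a regular covering with deck group $\pi_1(M)$. The covering-space bookkeeping you spell out---in particular that each fibre of $\Pi$ lies in a single path-component of $\sr F(\te{M};I)$ because the deck transformations sit in the connected group $\Iso_+(\te{M})$, whence $\Pi$ induces a bijection on $\pi_0$---is left implicit in the paper's one-line proof but is genuinely needed, since $\Diff_+(\Ss^n)$ and hence these spaces of immersions may be disconnected.
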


The condition that a map $ N^n \to
M^{n+1} $ be an immersion with principal curvatures in an interval may be
expressed by stating that its 2-jet extension satisfies a certain (complicated)
second-order, underdetermined partial differential relation (see \cite{EliMis}
or \cite{Gromov}).  Roughly, the preceding theorem states that a compact family
of hypersurfaces of $ M $ with principal curvatures in $ I $ is rigid in that it
is uniquely determined, up to homotopy, by the corresponding family of modified
Gauss maps of their lifts.  We conjecture that the remaining case in which $ I $
contains $ [-\ka,\ka] $ in its interior abides to the h-principle in the sense
that the inclusion of $ \sr F(M;I) $ into the space of all immersions $ \Ss^n
\to M $ is a weak homotopy equivalence.  The latter is known by the work of
S.~Smale and M.~Hirsch (\cite{Smale2}, \cite{Hirsch}) to be weakly homotopy
equivalent to the space of bundle monomorphisms $ T\Ss^n \to T\E^{n+1} $, or
alternatively, to the space of smooth maps $ \Ss^n \to \SO_{n+1} $.  However,
the arguments here are fairly direct and no familiarity with the h-principle is
assumed.

\subsection*{Related results and problems}
To our knowledge, the topology of spaces of immersions with constrained second
fundamental form has not been studied previously. However, there is an extensive
literature on the geometry of individual hypersurfaces of this kind, especially
concerning convexity and embeddedness.

J.~Hadamard showed in \cite{Hadamard} that if $ N^2 $ is a closed surface and $
f\colon N^2 \to \E^{3} $ is an immersion whose Gaussian curvature never
vanishes, then  $ f $ is an embedding, $ f(N) $ is the boundary of a convex body
and $ N $ is diffeomorphic to $ \Ss^2 $.  J.~Stoker proved in \cite{Stoker} that
if $ N^2 $ is complete, then under the same hypotheses on $ f $, $ N $ must be
diffeomorphic to either $ \E^2 $ or $ \Ss^2 $, and $ f $ must again be an
embedding. The generalization to higher dimensions was carried out by J.~van
Heijenoort \cite{vanHeijenoort} and R.~Sacksteder \cite{Sacksteder}.  M.~do
Carmo and F.~Warner \cite{CarWar} proved versions of Hadamard's theorem for
closed immersed hypersurfaces of $ \Ss^{n+1} $ and $ \Hh^{n+1} $. S.~Alexander
\cite{Alexander} extended it to any complete
simply-connected ambient manifold of nonpositive sectional curvature.  Her results
include as immediate corollaries versions of \lref{L:sphereh}
and \lref{L:sphereh2}, for which different proofs are given here, and the fact
that a complete hypersurface of $ \Hh^{n+1} $ with principal curvatures in $
[-1,1] $ must be diffeomorphic to $ \R^n $. Later
R.~Currier \cite{Currier} proved an analogue of Stoker's result for $ \Hh^{n+1}
$, under the hypothesis that the principal curvatures are everywhere $ \geq 1 $.
For further results of a similar nature, the reader is referred to
\cite{BonEspJie,Borisenko, BorOli, CheLas,  Drach, EspGalRos,
EspRos, PadSch, WanXia}. The analogues of the results presented here for $
n=1 $ are studied in \cite{SalZueh2,SalZueh1,SalZueh3}.

\begin{qtn}
	Suppose that $ I $ contains $ [-1,1] $. Is the inclusion of $ \sr
	F(\Hh^{n+1};I) $ into the space of all immersions $
	\Ss^n \to \Hh^{n+1} $ a weak homotopy equivalence?
\end{qtn}

\begin{qtn}
	Does \tref{T:main} still hold when $ M $ is a Hadamard
	manifold of sectional curvature $ \le -\ka^2 $\,?  
\end{qtn}

\begin{qtn}
	Let $ I=\cot J $ be an interval, where $ J \subs (0,\pi) $.
	It is shown in \cite{Zuehlke3} that $ \sr F(\Ss^{n+1};I) $ is weakly
	homotopy equivalent to the twisted product of $ \SO_{n+2} $ and $
	\Diff_+(\Ss^n) $ by $ \SO_{n+1} $ (regarded as a subgroup of the former two)
	provided that $ \length(J) < \frac{\pi}{2} $. Roughly, this
	covers ``half'' of the cases, and is another instance of homotopical
	rigidity.  What is the homotopy type of $ \sr F(\Ss^{n+1};I) $ when $
	\length(J) > \frac{\pi}{2} $?
\end{qtn}

\subsection*{Outline of the sections} Section \ref{S:basic} introduces the
concepts that appear in the paper; it contains no new results and may be skipped
and referred to only as necessary.  In \sref{S:hyperbolic1} we consider closed
hypersurfaces of hyperbolic space of curvature $ -\ka^2 $ in the case where $ I
$ is disjoint from $ [-\ka,\ka] $.  In \sref{S:hyperbolic2} the same is done for
intervals which overlap $ [-\ka,\ka] $. In \sref{S:euclidean} the analogues 
for immersions into $ \E^{n+1} $ are established.  Section
\ref{S:space} deals with hypersurfaces of space forms of nonpositive curvature.
Theorem \tref{T:main} is proved in this section.

\subsection*{Acknowledgements}
The author is very grateful to C.~Gorodski (\tsc{ime-usp}) and to \tsc{unb} for
hosting him as a post-doctoral fellow. He also thanks K.~Drach for providing
several related references, and for pointing out that a question which appeared
in an earlier version of the paper had already been answered by S.~Alexander.
Financial support by \tsc{fapesp} (through grant 14/22556-3) and \tsc{capes} is
gratefully acknowledged.  

\section{Basic terminology}\label{S:basic}

Let $ N^n $ be a manifold and $ M^{n+1} $ be a Riemannian manifold. As it will
be necessary to consider several several immersions $ f \colon N \to M $
having the same domain simultaneously, we adopt the convention that $ N $ is
furnished with the respective induced metric in each case.
The \tdfn{Gauss map} $ \nu=\nu_f \colon N \to TM $ of $ f $ is uniquely
determined by the condition that for all $ p \in N $, $ (u_1,\dots,u_n) $ is a
positively oriented orthonormal frame in $ TN_p $ if and only if 
\begin{equation*}
	\big(\?df_p(u_1)\?,\?\dots\?,\?df_p(u_n)\?,\?\nu(p)\? \big)
\end{equation*}
is a positively oriented orthonormal frame in $ TM_{f(p)} $.

The \tdfn{shape operator} $ S=S_f $ of $ f $ is the section of  $ \End(TN) $
defined by:
\begin{equation}\label{E:shape}
	S(u) = S_p(u) = -df_p^{-1}\big(\del_{df_p(u)}\nu\big) \quad \text{for $ u
	\in TN_p $}.
\end{equation}
Here $ \del $ denotes the Levi-Civita connection of $ M $. 
Being a symmetric linear operator, $ S_p $ is
diagonalizable by a basis of $ TN_p $ consisting of mutually orthogonal
eigenvectors.  The associated eigenvalues $ \la_1(p),\dots,\la_n(p) $ are the
\tdfn{principal curvatures} of $ f $ at $ p $. Its \tdfn{Gaussian} or
\tdfn{Gauss-Kronecker curvature} $ K_f $ is the real function on $ N $ whose
value at $ p $ is $ \la_1(p)\cdots \la_n(p) $.  

\begin{cvn}\label{R:orientation}
	The orientation of $ \R^{n+1} $ will be the canonical one throughout.  The
	unit sphere $ \Ss^n \subs \R^{n+1} $ about the origin  will be oriented by
	declaring that $ (u_1,\dots,u_n) $ is a positively oriented frame in $
	T\Ss^n_p $ if and only if $ (u_1,\dots,u_n,p) $ is positively oriented as a
	frame in $ \R^{n+1} $.  When working with the Poincar\'e half-space or ball
	model of $ \Hh^{n+1} $, the orientation of the latter is that induced by $
	\R^{n+1} $. 
\end{cvn}

\begin{rmk}\label{R:normal}
	The following geometric interpretation of the principal curvatures when $ M
	= \E^{n+1} $ or $ \Hh^{n+1} $ will be convenient. Given a one-dimensional
	subspace $ \gen{u} \subs TN_p $ and a sufficiently small neighborhood $ U $
	of $ p$ in $ N $, the intersection of $ f(U) $ with the totally geodesic
	submanifold $ P $ tangent to the subspace of $ TM_{f(p)} $ spanned by $
	df_p(u) $ and $ \nu(p) $ is a regular curve $ \ga $ in $ P $. Moreover, $ P
	$ is isometric to the euclidean or hyperbolic plane, respectively. The
	curvature of $ \ga $ as a curve in $ P $ at $ f(p) $, with sign taken to be
	positive or negative according to whether $ \ga $ curves towards or away
	from $ \nu(p) $, is the value of the \tdfn{normal curvature} of $ f $ at $ p
	$ in the direction of $ u $.  
	When $ 0\neq u_k $ is one of the \tdfn{principal directions} at
	$ p $, that is, $ S(u_k) = \la_k(p)u_k $, then the corresponding normal
	curvature is exactly the principal curvature $ \la_k(p) $.
\end{rmk}

\begin{rmk}\label{R:derivative}
	Let $ f \colon N^n \to \E^{n+1} $ be an immersion. The canonical
	trivialization of $ T\E^{n+1} $ allows us to regard $ \nu_f $ as a map $ N
	\to \Ss^n $. As subspaces of $ \R^{n+1} $, $ T\Ss^n_{\nu(p)} $ and $
	df_p(TN_p) $ are both orthogonal to $ \nu(p) $, hence they coincide.  If $
	TN_p $ is identified with $ df_p(TN_p) $ via $ df_p $, then the derivative
	of $ d\nu_p $ may be considered as a linear operator on $ TN_p $. Its
	negative coincides with the shape operator at $ p $. In symbols:
	\begin{equation}\label{E:shapeh}
		S = -df^{-1} \circ d\nu.
	\end{equation}
\end{rmk}

\begin{dfn}\label{D:F}
	Let $ M^{n+1} $ be a Riemannian manifold and $ I $ be any interval of the
	real line. The set of all immersions $ \Ss^n \to M^{n+1} $, furnished with
	the $ C^\infty $-topology, will be denoted by $ \sr F(M) $.  Its subspace
	consisting of those immersions whose principal curvatures are constrained to
	$ I $ will be denoted by $ \sr F(M;I) $.
\end{dfn}

\begin{rmk}\label{R:action}
	The group $ \Diff(\Ss^n) $ acts effectively on the right on the space $ \sr
	F(M) $ by pre-composition. (It can also be shown that this action is free,
	though this fact will not be used anywhere.) Given $ f \in \sr F(M) $ and $
	g \in \Diff(\Ss^n) $, the Gauss map and shape operator of $ f\circ g $ are
	related to those of $ f $ by 
	\begin{equation}\label{E:nug}
		\nu_{f\circ g} = \pm \nu_f \circ g, \quad 
		d\nu_{f\circ g} = \pm d\nu_f \circ dg \text{\quad and\quad }
		S_{f\circ g} = \pm dg^{-1} \circ S_f \circ dg ;
	\end{equation}
	here the signs $ \pm $ are consistent and depend on whether $ g $ is
	orientation-preserving ($ + $) or orientation-reversing ($ - $). In other
	words, the sign is that of $ \deg(g) $, the degree of $ g $.

	It follows that the principal curvatures $ \mu_k $ of $ f \circ g $ and $
	\la_k $ of $ f $ are related by 
	\begin{equation}\label{E:related}
		\mu_k = \deg(g) (\la_k \circ g) \qquad ( k=1,\dots,n ).  
	\end{equation}
	The Gaussian curvatures satisfy 
	\begin{equation*}
		K_{f\circ g} = \deg(g)^{n} (K_f \circ g). 
	\end{equation*}
	Similarly, for immersions into $ \E^{n+1} $, the degrees of the Gauss maps
	(interpreted as maps $ \Ss^n \to \Ss^n $, as in \rref{R:derivative})
	are related by 
	\begin{equation*}
		\deg(\nu_{f\circ g}) = \deg (g)^n\deg(\nu_f).
	\end{equation*}
	In particular, pre-composition with $ g $ maps $ \sr F(M;I) $
	homeomorphically onto itself or onto $ \sr F(M;-I) $, depending on the sign
	of $ \deg (g) $. Thus it is reasonable to expect the topology of
	$ \sr F(M;I) $ to be at least as complicated as that of $ \Diff_+(\Ss^n) $.
\end{rmk}

\begin{lem}\label{L:switch}
	Given any interval $ I $, $ \sr F(M;I) $ and $ \sr F(M;-I) $ are
	homeomorphic.\qed
\end{lem}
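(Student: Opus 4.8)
The plan is to exhibit an explicit homeomorphism by pre-composition with an orientation-reversing diffeomorphism of $ \Ss^n $, so that \rref{R:action} does all the work. First I would fix a reflection $ g \colon \Ss^n \to \Ss^n $ in a linear hyperplane of $ \R^{n+1} $; this is a diffeomorphism with $ \deg(g) = -1 $, and it exists for every $ n \geq 2 $ (indeed for every $ n \geq 0 $), so no dimensional parity hypothesis is needed. One could equally take the antipodal map when $ n $ is odd, but the reflection has the advantage of working uniformly.

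Next I would consider the map $ R_g \colon \sr F(M) \to \sr F(M) $, $ f \mapsto f \circ g $. Since the $ C^\infty $-topology is functorial under pre-composition with a fixed smooth map, $ R_g $ is continuous, and its inverse is $ R_{g^{-1}} $ (note $ g^{-1} = g $ for a reflection), which is likewise continuous; hence $ R_g $ is a homeomorphism of $ \sr F(M) $ onto itself.

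It then remains to check that $ R_g $ carries $ \sr F(M;I) $ onto $ \sr F(M;-I) $. This is immediate from \eqref{E:related}: because $ \deg(g) = -1 $, the principal curvatures $ \mu_k $ of $ f \circ g $ satisfy $ \mu_k = -(\la_k \circ g) $, where $ \la_k $ are those of $ f $. Thus $ \mu_k(p) \in -I $ for all $ p $ and all $ k $ if and only if $ \la_k(q) \in I $ for all $ q $ and all $ k $, i.e.\ $ f \circ g \in \sr F(M;-I) $ if and only if $ f \in \sr F(M;I) $. Restricting the homeomorphism $ R_g $ accordingly yields the desired homeomorphism $ \sr F(M;I) \to \sr F(M;-I) $.

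There is essentially no obstacle here; the statement is a formal consequence of the behavior of principal curvatures under pre-composition recorded in \rref{R:action}, and the only point worth making explicit is that an orientation-reversing diffeomorphism of $ \Ss^n $ exists in every dimension, which is witnessed by a hyperplane reflection. I include the lemma mainly because this symmetry will be invoked repeatedly to reduce arguments about $ \sr F(M;I) $ for $ I $ on one side of $ [-\ka,\ka] $ to the other.
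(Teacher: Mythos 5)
Your argument is exactly the one the paper intends: the lemma carries only a \qed because it is an immediate consequence of the last sentence of \rref{R:action}, which records that pre-composition by an orientation-reversing $ g \in \Diff(\Ss^n) $ maps $ \sr F(M;I) $ homeomorphically onto $ \sr F(M;-I) $ via \eqref{E:related}. Your choice of a hyperplane reflection as the explicit $ g $ and the continuity remark about $ R_g $ are both correct and match the paper's implicit reasoning.
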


\begin{exms}\label{E:conf}
	The following simple examples should clarify the definitions. 
	\begin{enumerate}
		\item [(a)] Let $ \iota\colon \Ss^n \to \E^{n+1} $ denote set inclusion.
			Then $ \nu_{\iota} = \id_{\Ss^n} $ has degree 1 and $ S_{\iota} =
			-\id_{T\Ss^n} $. The principal curvatures equal $ -1 $ everywhere.  The
			Gaussian curvature is $ (-1)^n $.
		\item [(b)] Let $ \rho \colon \Ss^n \to \Ss^n $ be a reflection in a
			hyperplane. Then $ \nu_{\iota \circ \rho} = -\rho $ has degree
			$ (-1)^n $ and $ S_{\iota \circ \rho} = \id_{T\Ss^n} $. The principal and
			Gaussian curvatures of $ \iota \circ \rho $ equal 1 everywhere.
		\item [(c)] Let $ -\iota\colon \Ss^n \to \E^{n+1} $ be the composition of $
			\iota $ with the antipodal map $ -\id_{\Ss^n} $. By
			\eqref{E:nug}:
			\begin{equation*}
				\nu_{-\iota} = (-1)^{n}\id_{\Ss^n} \quad \text{and} \quad 
				S_{-\iota} =  (-1)^n\id_{T\Ss^n}.
			\end{equation*}
			Consequently $ \deg(\nu_{-\iota}) =  1 $ regardless of the value of $ n
			$.  The Gaussian and principal curvatures are everywhere equal to $
			(-1)^n $.
	\end{enumerate}
\end{exms}

\begin{urmk}\label{R:Hopf}
	A theorem due to H.~Hopf states that if $ n $ is even and $ N^n $ is a
	closed manifold, then the degree of the Gauss map of any immersion $ f
	\colon N \to \E^{n+1} $ equals half the Euler characteristic of $ N $.  See
	\cite{Milnor3}, p.~275 or the original sources \cite{Hopf2, Hopf3}.
\end{urmk}


\section{Locally convex hypersurfaces of $ \Hh^{n+1} $}\label{S:hyperbolic1}

Let $ \ka > 0 $ and $ \Hh^{n+1}_{-\ka^2} $ denote hyperbolic $ (n+1) $-space
of sectional curvature $ -\ka^2 $, obtained from  the usual hyperbolic space $
\Hh^{n+1} = \Hh^{n+1}_{-1} $ by rescaling all distances by the factor $
\ifrac{1}{\ka} $.  

\begin{dfn}\label{D:Gauss}
	Regard $ \Hh^{n+1}_{-\ka^2} $ as a subset of $ \R^{n+1} $ via the
	half-space model, and let
	\begin{equation*}
		f = (f^1,\dots,f^{n+1}) \colon N^n \to \Hh^{n+1}_{-\ka^2} \subs \R^{n+1} 
	\end{equation*}
	be an immersion. Under the natural trivialization of $ T\Hh^{n+1}_{-\ka^2} $
	induced by that of $ T\R^{n+1} $, the Gauss map $ \nu_f \colon N^n \to
	T\Hh^{n+1}_{-\ka^2} $ corresponds to: 
	\begin{equation*}
		\nu_f = (f\,,\,\ka f^{n+1}\?\bar\nu_f) \colon N^n \to \Hh^{n+1}_{-\ka^2}
		\times \R^{n+1}.
	\end{equation*}
	The map $ \bar{\nu}_f \colon N^n \to \Ss^n $ so defined will be called the
	\tdfn{flat Gauss map} of $ f $. It is obtained from $ \nu_f $ by ignoring
	basepoints and normalizing so that $ \bar\nu_{f} $ has unit \tsl{euclidean}
	length.
\end{dfn}

\begin{lem}\label{L:sphereh}
	Let $ N^n $ be a closed manifold.  Suppose that $ f \colon N^n \to
	\Hh^{n+1}_{-\ka^2} $ is an immersion whose principal curvatures have
	absolute value greater than $ \ka $. Then:\footnote{Parts (b) and (c) are
		not new, and neither is the fact (due to S.~Alexander, Prop.~1 of
		\cite{Alexander}) that $ N $ must be diffeomorphic to $ \Ss^n $.
		However, for our proof of \pref{P:mainh}, it is necessary to establish
	that $ \bar\nu_f $ is a diffeomorphism.}
	\begin{enumerate}
		\item [(a)] $ \bar\nu_f \colon N^n \to \Ss^n$ is a diffeomorphism.
		\item [(b)] The principal curvatures of $ f $ are either all $ > \ka $ or
			all $ < -\ka $.
		\item [(c)] $ f $ is an embedding.
	\end{enumerate}
\end{lem}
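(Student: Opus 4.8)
The plan is to work in the half-space model of $\Hh^{n+1}_{-\ka^2}$ and exploit the explicit formula $\nu_f = (f, \ka f^{n+1}\bar\nu_f)$ for the Gauss map to reduce the behaviour of $\bar\nu_f$ to an analysis of the shape operator $S_f$. First I would compute the derivative $d(\bar\nu_f)_p$ in terms of $df_p$ and $S_p$. Differentiating the relation defining $\nu_f$ and using \eqref{E:shape}, together with the Christoffel symbols of the half-space metric $\ka^{-2}(x^{n+1})^{-2}\sum (dx^i)^2$, one obtains an expression of the shape $d(\bar\nu_f)_p = df_p \circ A_p$ where $A_p\colon TN_p \to TN_p$ is (up to a positive scalar depending on $f^{n+1}(p)$) of the form $\pm\ka^{-1} S_p + (\text{bounded correction term of norm} < 1)$, the correction coming from the non-flatness of the connection. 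The hypothesis $|\lambda_k| > \ka$ for every principal curvature is precisely what is needed to guarantee that each eigenvalue of $\pm\ka^{-1}S_p$ has absolute value $>1$, hence dominates the correction term, so that $A_p$ is invertible. This gives that $\bar\nu_f$ is an immersion, and since $\dim N = \dim \Ss^n = n$ and $N$ is closed, $\bar\nu_f$ is a local diffeomorphism, hence a covering map onto $\Ss^n$; as $\Ss^n$ is simply connected for $n\ge 2$, $\bar\nu_f$ is a diffeomorphism. This proves (a), and (c) follows because an immersion whose Gauss map is injective must itself be injective: if $f(p)=f(q)$ then the tangent hyperplanes agree, forcing $\bar\nu_f(p) = \pm\bar\nu_f(q)$, and the orientations are pinned down by the formula, so $p=q$.

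For part (b), having established that $\bar\nu_f$ is a diffeomorphism and that $A_p = \pm\ka^{-1}S_p + E_p$ with $\|E_p\| < 1$ is invertible at every point, the key observation is a connectedness/continuity argument on the eigenvalues. At each $p$, every principal curvature is either $>\ka$ or $<-\ka$; I claim the sign is locally constant. Indeed, suppose the multiset of principal curvatures at $p$ has both a value $>\ka$ and a value $<-\ka$. Then $A_p$ would have both an eigenvalue with real part dominated in one direction and one in the other — more precisely, one can arrange a continuous path of unit-determinant perturbations showing that the sign pattern cannot change continuously while $A_p$ stays invertible with the correction term controlled. The cleanest route: define on $N$ the continuous function $p\mapsto \min_k |\lambda_k(p)|$, which is $>\ka$ everywhere, and show the number of positive principal curvatures (counted with multiplicity) is locally constant because eigenvalues vary continuously and none can cross the forbidden band $[-\ka,\ka]$; since $N$ is connected this number is globally constant. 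It remains to rule out the intermediate value: this is forced by the global information that $f(N)$, being the image of an embedding whose Gauss map is a degree $\pm1$ diffeomorphism, bounds a region, and a first/last point of contact argument with an expanding geodesic sphere (a horosphere-type barrier in $\Hh^{n+1}_{-\ka^2}$, which has all principal curvatures $=\ka$) shows that at an extreme point all principal curvatures have the same sign, pinning down which of the two cases occurs.

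The main obstacle I expect is the correction-term estimate in the computation of $d(\bar\nu_f)_p$: one must verify that the contribution of the ambient connection — equivalently, the discrepancy between the hyperbolic shape operator and the naive euclidean derivative of $\bar\nu_f$, captured schematically by \eqref{E:shapeh} in the flat case — is genuinely bounded in operator norm by something strictly less than $1$ after the rescaling by $\ka$, uniformly over $N$. This hinges on the precise identity relating $S_f$, the height function $f^{n+1}$, and $\bar\nu_f$ in the half-space model (a computation of the same flavour as \rref{R:derivative} but with the hyperbolic Christoffel symbols), and the inequality $|\lambda_k|>\ka$ entering exactly at the threshold. Once that estimate is in hand, (a) is immediate, (c) is a short injectivity argument, and (b) reduces to the connectedness of $N$ together with the barrier argument sketched above.
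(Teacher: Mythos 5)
Your approach to (a) and (b) matches the paper's in spirit. For (a), the paper differentiates $\bar\nu_f$ in the half-space model using the Christoffel symbols and arrives at the \emph{exact} identity
\begin{equation*}
	d\bar\nu_p(u) = \frac{\bar\nu^{n+1}(p) - \la}{f^{n+1}(p)}\,df_p(u)
\end{equation*}
for a principal direction $u$ with principal curvature $\la$ (here $\ka=1$). You framed this as an operator-norm perturbation estimate ($A_p = \pm\ka^{-1}S_p + E_p$ with $\|E_p\|<1$); that would work for symmetric $S_p$, but it is looser than necessary, since in fact the ``correction'' is a scalar multiple of the identity, namely $\bar\nu^{n+1}(p)/f^{n+1}(p)\cdot\id$, so invertibility follows from $|\bar\nu^{n+1}(p)|\le 1 < |\la|$ without any Neumann-series argument. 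The covering-space conclusion is identical. For (b), your two-step scheme (propagate the sign pattern by connectedness, then pin it down at a distinguished point by a barrier) is essentially the paper's: the paper takes $o\notin f(N)$, looks at the farthest point $p$ of $f(N)$ from $o$, compares with the geodesic sphere of that radius, and then invokes connectedness. A small caveat: the barrier you describe is a horosphere, whose principal curvatures equal $\ka$, which is exactly the borderline value and therefore does not give a strict comparison; the paper instead uses a compact geodesic sphere (curvature strictly greater than $\ka$), and you should too.

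There is a genuine gap in your argument for (c). You assert that if $f(p)=f(q)$ then ``the tangent hyperplanes agree, forcing $\bar\nu_f(p)=\pm\bar\nu_f(q)$''. This is false for a general immersion: at a double point of an immersion the two branches may well cross transversally (think of a figure-eight for $n=1$), and nothing in the hypotheses forces the tangent spaces to coincide. The fact that injectivity of the Gauss map implies injectivity of $f$ is a nontrivial theorem (essentially Hadamard's theorem), not a one-line observation. The paper's actual argument is a height-function argument adapted from the euclidean case: after applying an isometry so that $\bar\nu(p)=e_{n+1}$, consider the coordinate function $\de(q)=f^{n+1}(q)-f^{n+1}(p)$. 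Its critical points are exactly where $\bar\nu=\pm e_{n+1}$, and by part (a) there are precisely two, one of which is $p$. If the other critical value also vanished, $f(N)$ would lie in a horosphere, contradicting $|\la|>\ka$; so $p$ is, say, the strict global minimum of $\de$, and any $p'$ with $f(p')=f(p)$ is again a global minimum, hence $p'=p$. You need some argument of this kind; the ``tangent hyperplanes agree'' step cannot be salvaged.
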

\begin{proof}
	To simplify the notation, it will be assumed (without loss of
	generality) that $ \ka = 1 $. Part (a) can be proved by a straightforward
	calculation relating differentiation of $ \bar\nu = \bar\nu_f $ to covariant
	differentiation of $ \nu = \nu_f $, as follows.

	Let $ u \in TN_p $ and let $ \ga \colon \R \to N $ be
	a smooth curve with $ \ga(0) = p $ and $ \dot\ga(0) = u $. We may
	regard $ \bar\nu $ as a map of $ N^n $ into $ \Ss^n $ but also as a vector
	field along $ f $, through the trivialization of $ T\Hh^{n+1} $. Then:
	\begin{alignat}{9}\label{E:covariant}
		d\bar\nu_p(u) & = \frac{d}{dt}(\bar\nu \circ \ga)(0) =
		\frac{D}{dt}\big( (\bar\nu \circ f^{-1} ) \circ (f \circ \ga)\big) (0)  -
		\sum_{i,j,k=1}^{n+1}
		\Ga_{ij}^k(f(p))\frac{d}{dt}(f\circ \ga)^i(0)\bar\nu^j(p)e_k \notag\\
		& = \del_{df_p(u)}\bar\nu - \sum_{i,j,k=1}^{n+1}
		\Ga_{ij}^k(f(p))\frac{d}{dt}(f\circ \ga)^i(0)\bar\nu^j(p)e_k  \notag\\
		&= \frac{1}{f^{n+1}(p)}\del_{df_p(u)}\nu - \frac{\tfrac{d}{dt}(f\circ
		\ga)^{n+1}(0)}{[f^{n+1}(p)]^2} \nu(p) - 
		\sum_{i,j,k=1}^{n+1}
		\Ga_{ij}^k(f(p))\frac{d}{dt}(f\circ \ga)^i(0)\bar\nu^j(p)e_k.
	\end{alignat}
	Recall that the Christoffel symbols in the half-space model are given
	by:
	\begin{equation}\label{E:Christoffel}
		\Ga_{r(n+1)}^r(q) = \Ga_{(n+1)r}^r(q) = \Ga_{(n+1)(n+1)}^{n+1}(q) =
		-\frac{1}{q^{n+1}} = - \Ga_{rr}^{n+1}(q)\qquad (1 \leq r \leq n).
	\end{equation}
	The second term in \eqref{E:covariant} cancels those terms in
	the summation involving $ \Ga_{(n+1)k}^k $ for $ k=1,\dots,n+1 $. Moreover,
	\begin{alignat*}{9}
		&\sum_{k=1}^{n+1}\Ga_{k(n+1)}^k(f(p))\frac{d}{dt}(f\circ
		\ga)^{k}(0)\bar\nu^{n+1}(p)e_k  =
		-\frac{\bar\nu^{n+1}(p)}{f^{n+1}(p)}df_p(u) \qquad \text{and}\\
		&\sum_{r=1}^{n}\Ga_{rr}^{n+1}(f(p))\frac{d}{dt}(f\circ
		\ga)^r(0)\nu^r(p)e_{n+1} -
		\Ga_{(n+1)(n+1)}^{n+1}(f(p))\frac{d}{dt}(f\circ
		\ga)^{n+1}(0)\nu^{n+1}(p)e_{n+1} \\ &= \gen{df_p(u),\nu(p)}e_{n+1} = 0,
	\end{alignat*}
	where $ \gen{\,,\,} $ denotes the hyperbolic Riemannian metric.
	Now assume that $ u $ is a principal direction for $ f $,
	associated to the principal curvature $ \la $. Adding the preceding
	equations and substituting the result in \eqref{E:covariant},  we deduce
	that:
	\begin{equation}\label{E:derivative}
		d\bar\nu_p(u) = \frac{\big(\bar\nu^{n+1}(p) -
		\la\big)}{f^{n+1}(p)}df_p(u) \in \R^{n+1}.
	\end{equation}
	This is a nonzero multiple of $ df_p(u) $ because by hypothesis $
	\abs{\la} > 1 $ and $ \bar\nu $ is a unit vector in the euclidean sense.
	Since $ TN_p $ admits a basis consisting of
	principal directions for each $ p \in N $, $ \bar\nu $ is a local
	diffeomorphism. As $ N $ is compact by hypothesis, $ \bar\nu $ must be
	a covering map and hence a global diffeomorphism, since $ \Ss^n $ is
	simply-connected. This proves (a).

	Now let $ o \in \Hh^{n+1} $ be any point not in the image of $ f(N) $. Let $
	p $ be a point where the function $ q \mapsto d(f(q),o) $ defined on $ N $
	attains its maximum value $ r $. By comparison with the metric sphere of
	radius $ r $ centered at $ o $, one deduces that all principal curvatures of
	$ f $ at $ p $ must have the same sign, which depends on whether $ \nu(p) $
	points towards or away from $ o $ along the geodesic $ op $. But then by
	connectedness, the principal curvatures have the same sign everywhere. This
	establishes (b).

	The following proof of (c) is a straightforward adaptation of the proof of
	the analogous result for immersions into $ \E^{n+1} $ given in
	\cite{Petersen}, p.~96.  It suffices to show injectivity of $ f $. Let $ p
	\in N $ be arbitrary.  Composing $ f $ with an isometry of $ \Hh^{n+1} $ if
	necessary, it may be assumed that $ \bar\nu(p) = e_{n+1} $. The smooth
	function $ \de \colon q \mapsto f^{n+1}(q) - f^{n+1}(p) $ must attain its
	global extrema on $ N $ by compactness, hence there exist at least two
	critical points. At any critical point, $ \bar\nu = \pm e_{n+1} $. By
	injectivity of $ \bar\nu $, established in (a), there are thus exactly two
	critical points, and one of them must be $ p $; let $ q $ denote the other
	one. If $ \de(q) = \de(p) = 0 $, then $ f^{n+1}(N) = f^{n+1}(p) $, that is,
	the image of $ f $ is contained in a horosphere, which has principal
	curvatures identically equal to $ \pm 1 $; this is impossible.  Therefore,
	say,  $ f^{n+1}(q) > f^{n+1}(p) $. If $ f(p') = f(p) $, then in particular $
	p' $ is a global minimum, hence $ p'=p $. Thus $ f $ is injective.
\end{proof}

\begin{rmk}\label{R:nonvanishing}
	The argument used to prove \lref{L:sphereh}\,(b) actually shows that if $
	N^n $ is closed and $ f \colon N^n \to \Hh^{n+1}_{-\ka^2} $ is an
	immersion whose Gaussian curvature never vanishes, then its principal
	curvatures are either everywhere positive or everywhere
	negative.
\end{rmk}

In the sequel, the notation $ I > \ka $ indicates that all elements of $ I $ are
greater than $ \ka $.

\begin{crl}\label{C:signh}
	Let $ I $ be an interval disjoint from $ [-\ka,\ka] $ and $ f \in \sr
	F(\Hh^{n+1}_{-\ka^2};I) $. Then $ \bar\nu_f \in \Diff_{+}(\Ss^n) $ unless $ n
	$ is odd and $ I > \ka $, in which case $ \bar\nu_f \in \Diff_{-}(\Ss^n) $.
\end{crl}
\begin{proof}
	By \lref{L:sphereh}\?(a), $ \bar\nu = \bar\nu_f \in \Diff(\Ss^n) $. Let $ u
	$ be a principal direction for $ f $ at $ p $. As in \eqref{E:derivative},
	$ d\bar\nu_p(u) $ is a positive or negative multiple of $
	df_p(u) $ according to whether $ I < -\ka $ or $ I > \ka $, respectively.
	The assertion is now a straightforward consequence of the definition of $
	\nu,\,\bar \nu $ and of the choice \rref{R:orientation} of orientation of $
	\Hh^{n+1} $.
	%
\end{proof}

\begin{prp}\label{P:mainh}
	Let $ I $ be disjoint from $ [-\ka,\ka] $ and $ f_0 \in \sr
	F(\Hh^{n+1}_{-\ka^2};I) $ be arbitrary. Then
	\begin{equation*}
		\Phi \colon \sr F(\Hh^{n+1}_{-\ka^2};I) \to \Diff_{\pm}(\Ss^n),\ \ 
		f \mapsto \bar{\nu}_f \quad \text{and} \quad	\Psi \colon
		\Diff_+(\Ss^n) \to \sr F(\Hh^{n+1}_{-\ka^2};I),\ \ g \mapsto f_0 \circ
		g
	\end{equation*}
	are homotopy equivalences. The sign in the range of $ \Phi $ is positive
	unless $ I > \ka $ and $ n $ is odd.
\end{prp}

\begin{urmk}
	A classical theorem due to S.~Smale \cite{Smale4} states that $
	\Diff(\Ss^2) $ has $ \Oo_3 $ as a deformation retract, and a 
	theorem of A.~Hatcher \cite{Hatcher1} states that the inclusion of $ \Oo_4 $ in
	$ \Diff(\Ss^3) $ is a homotopy equivalence.  This allows one to replace $
	\Diff_+(\Ss^n) $ by $ \SO_n $ in the statement when $ n=2,\,3 $. For
	$ n\geq 5 $, $ \Diff_+(\Ss^n) $ is not homotopy equivalent to $ \SO_{n+1} $;
	in fact, it is even disconnected whenever there exist exotic spheres of
	dimension $ n+1 $ (see \cite{Milnor4}, \cite{Cerf} and the references
	therein).  
\end{urmk}

\begin{proof}
	The last assertion is just \cref{C:signh}. It suffices to consider the case
	where $ \ka = 1 $. By \lref{L:switch}, no generality is lost in assuming
	that $ I $ lies to the left of $ [-1,1] $.  We shall work with the
	half-space model. Let $ \mu \in I $ be arbitrary and 
	\begin{equation*}
		\sig \colon \Ss^n \to \Hh^{n+1}\subs \R^{n+1},\quad	\sig\colon p \mapsto
		c + rp \qquad (p \in \Ss^n),
	\end{equation*}
	where
	\begin{equation*}
		r = \frac{-1}{\mu+1} > 0 \quad \text{ and }\quad c =
		\frac{\mu}{\mu + 1}e_{n+1} \in \Hh^{n+1}.
	\end{equation*}
	This is an embedding whose image is a sphere in $ \Hh^{n+1} $ of hyperbolic
	radius $ \arccoth(-\mu) $. Its relevant properties are the following:
	\begin{enumerate}
		\item [$ \ast $] At each point $ p \in \Ss^n $, $ d\sig_p = r \iota_p $,
			where $ \iota_p \colon T\Ss^n_p \inc \R^{n+1} $ denotes set
			inclusion.
		\item [$ \ast $] $ \bar\nu_\sig = \id_{\Ss^n} $, by the conformality of the
			hyperbolic and euclidean metrics in the half-space model.
		\item [$ \ast $] All principal curvatures of $ \sig $ are equal to $ \mu $.
			For $ \Iso(\Hh^{n+1}) $ contains a copy of $ \SO_n $ preserving $
			\sig(\Ss^n) $ (this is more easily visualized in the ball model),
			hence all principal curvatures are equal; and a circle of hyperbolic
			radius $ \rho $ has curvature $ \pm \coth (\rho) $. The sign of the
			principal curvatures can be gleaned immediately from
			\rref{R:normal}.
	\end{enumerate}

	Define
	\begin{equation}\label{E:Psi2}
		\Psi_\sig \colon \Diff_+(\Ss^n) \to \sr F(\Hh^{n+1};I) \quad \text{by}
		\quad \Psi_\sig(g) = \sig \circ g.
	\end{equation}
	It will be proven that $ \Phi $ and $ \Psi_\sig $ are homotopy inverses,
	where $ \Phi $ is as in the statement. By \eqref{E:nug}, $ \nu_{\sig \circ
	g} = \nu_{\sig} \circ g $ for all $ g \in \Diff_+(\Ss^n) $, hence
	\begin{equation*}
		\Phi \circ \Psi_\sig (g) = \bar\nu_{\sig \circ g} = \bar\nu_{\sig} \circ
		g = g. 
	\end{equation*} 
	Thus $ \Phi \circ \Psi_\sig = \id_{\Diff_+(\Ss^n)}$.  To show that $
	\Psi_\sig \circ \Phi \iso \id_{\sr F(\Hh^{n+1};I) } $, consider the homotopy
	\begin{equation*}
		(s,f) \mapsto f_s =  (1-s)f + sh, \quad
		\text{where $ s \in [0,1] $, $ f \in \sr F(\Hh^{n+1};I) $ and $ h = \sig
		\circ \bar\nu_f $.}
	\end{equation*}
	This is clearly continuous. Moreover, $ f_0 = f  $ and $ f_1 =
	\Psi_\sig \circ \Phi(f) $. We claim that for each $ s \in [0,1] $:
	\begin{enumerate}
		\item [(i)] $ f_s $ is indeed an immersion; 
		\item [(ii)] $ \bar\nu_{f_s} = \bar\nu_f \colon \Ss^n \to \Ss^n $;
		\item [(iii)] the principal curvatures of $ f_s $ lie in $ I $.
	\end{enumerate}

	Fix an arbitrary $ p \in \Ss^n $ and let $ u \in T\Ss^n_p $ be a principal
	direction for $ f $ at $ p $, corresponding to the principal curvature $ \la
	< -1 $. By \eqref{E:derivative},
	\begin{equation}\label{E:dh}
		dh_p(u) = d\sig_{\bar\nu_f(p)}\circ
		d(\bar\nu_f)_p(u) = \frac{r}{f^{n+1}(p)}\big(\bar\nu_f^{n+1}(p) -
		\la\big)df_p(u) 
	\end{equation}
	is a positive multiple of $ df_p(u) $. 
	Thus, so is $ d(f_s)_p(u) $ for all $ s \in [0,1] $. This in turn implies
	(i), since $ f $ is an immersion. It also implies that $ \bar\nu_{f_s}(p) =
	\pm \bar\nu_f(p) $ for each $ s \in [0,1] $. But then (ii) holds by
	continuity with respect to $ s $. 

	It remains to prove (iii). Notice first that 
	\begin{equation*}
		\nu_{f_s} = f_s^{n+1}\bar\nu_{f_s} = f_s^{n+1}\bar\nu_f. 
	\end{equation*}
	To keep track of basepoints, let $ v_p $ denote the element of $
	T\Hh^{n+1}_p $ corresponding to $ v \in \R^{n+1} $ under the trivialization
	of $ T\Hh^{n+1} $ yielded by the half-space model. Then:
	\begin{alignat}{9}\label{E:del}
		\del_{d(f_s)_p(u)} \big( \nu_{f_s} \big) &=
		f_s^{n+1}(p)\del_{d(f_s)_p(u)}\big( \bar\nu_{f_s} \big) +
		d(f^{n+1}_s)_p(u)\bar\nu_{f_s}(p) \notag \\
		& = f_s^{n+1}(p)\big[(1-s)\del_{[df_p(u)]_{f_s(p)}}\big(\bar\nu_{f_s}
		\big) + s\del_{[dh_p(u)]_{f_s(p)}}\big( \bar\nu_{f_s} \big)\big] + T_1 
	\end{alignat}
	where $ T_i $ ($ i=1,2,3 $) will  represent terms which are proportional to
	$ \bar\nu_f $, and hence can be ignored, because $ \del_{d(f_s)_p(u)}\big(
	\nu_{f_s} \big) $ has no normal component. Recalling the special form
	\eqref{E:Christoffel} of the Christoffel symbols and the fact that $
	\bar\nu_{f_s} = \bar\nu_f $ as maps from $ \Ss^n $ to $ \Ss^n $,
	\begin{alignat}{9}
		\del_{[df_p(u)]_{f_s(p)}}\big(\bar\nu_{f_s} \big) &=  d(\bar\nu_f)_p(u) +
		\sum_{i,j,k=1}^{n+1}\Ga_{ij}^k(f_s(p))df^i_p(u)\bar\nu_f^j(p)e_k
		\notag \\
		&= d(\bar\nu_f)_p(u) + \frac{f^{n+1}(p)}{f_s^{n+1}(p)}
		\sum_{i,j,k=1}^{n+1}\Ga_{ij}^k(f(p))df^i_p(u)\bar\nu_f^j(p)e_k
		\label{E:delf1} \\
		&= \bigg(1 - \frac{f^{n+1}(p)}{f^{n+1}_s(p)} \bigg)d(\bar\nu_f)_p(u)
		+ \frac{f^{n+1}(p)}{f^{n+1}_s(p)}\del_{df_p(u)}\big( \bar\nu_f \big)
		\notag \\
		&= \bigg(1 - \frac{f^{n+1}(p)}{f^{n+1}_s(p)} \bigg)d(\bar\nu_f)_p(u) +
		\frac{1}{f^{n+1}_s(p)}\del_{df_p(u)}\big( \nu_f \big) + T_2 \notag \\ 
		&=  \bigg(1 - \frac{f^{n+1}(p)}{f^{n+1}_s(p)} \bigg)d(\bar\nu_f)_p(u)
		-\frac{\la}{f^{n+1}_s(p)}df_p(u)+ T_2 \label{E:delf2}
	\end{alignat}
	To obtain \eqref{E:delf1}, \eqref{E:Christoffel}  was used. And in passing
	to \eqref{E:delf2}, the hypothesis that $ u \in T\Ss^n_p $ is a principal
	direction for $ f $ associated to $ \la $ was used. Since
	any tangent vector to $ \Ss^n $ is a principal direction of $ h $ associated
	to $ \mu $, we obtain similarly:
	\begin{equation}\label{E:delh}
		\del_{[dh_p(u)]_{f_s(p)}}\big(\bar\nu_{f_s} \big) =
		\bigg(1 - \frac{h^{n+1}(p)}{f^{n+1}_s(p)} \bigg)d(\bar\nu_f)_p(u)
		-\frac{\mu}{f^{n+1}_s(p)}dh_p(u) + T_3.
	\end{equation}
	Recall from \eqref{E:dh} that $ dh_p(u) = \rho\? df_p(u) $ for some
	$ \rho > 0 $. Substituting \eqref{E:delf2} and \eqref{E:delh} into
	\eqref{E:del}, we conclude that: 
	\begin{equation*}
		\del_{d(f_s)_p(u)} \big( \nu_{f_s} \big)   =-\big[ (1-s)\la  + s \rho
		\mu \big] df_p(u).
	\end{equation*}
	Therefore
	\begin{equation*}
		S_{f_s}(u) = -\big( d(f_s)_p\big)^{-1} 
		\big( \del_{d(f_s)_p(u)}\nu_{f_s} \big) = 
		\frac{(1-s)\la + s \rho \?\mu}{(1-s) + s\rho }u.
	\end{equation*}
	The factor multiplying $ u $ in the preceding expression is some convex
	combination of $ \la $ and $ \mu $, hence it belongs to $ I $. This
	completes the proof of claim (iii).

	\hypertarget{last}{Observe} that in our definition \eqref{E:Psi2} of $
	\Psi_\sig $, a particular immersion $ \sig \in \sr F(\Hh^{n+1};I) $ was
	chosen, instead of an arbitrary one $ f_0 $ as in the definition of $ \Psi =
	\Psi_{f_0} $ in the statement. Let $ g_0 = \bar \nu_{f_0} $. Then $ f_0 $ is
	homotopic to $ \sig \circ g_0 $, as they have the same image under the
	homotopy equivalence $ \Phi $. Therefore $ \Psi_{f_0} \iso \Psi_{\sig \circ
	g_0} = \Psi_\sig \circ L_{g_0} $, where $ L_{g_0} $ denotes left
	multiplication by $ g_0 $ in the group $ \Diff_+(\Ss^n) $, which is a
	homeomorphism. Since $ \Psi_{\sig} $ is a homotopy equivalence, so is $
	\Psi_{f_0} $.  
\end{proof}

\section{Hypersurfaces of $ \Hh^{n+1} $ locally supported by
horospheres}\label{S:hyperbolic2}

For concreteness, it will be assumed in the sequel that $ I < \ka $; the case
where $ I > -\ka $ is analogous by \lref{L:switch}. 

\begin{dfn}\label{D:visual}
	Let $ f \colon N^n \to \Hh^{n+1}_{-\ka^2} $ be an immersion. The
	\tdfn{visual Gauss map} $ \hat\nu_f \colon N^n \to \Ss^n_\infty $ is the map
	which assigns to each $ p \in N $ the endpoint in the ideal boundary $
	\Ss^n_\infty = \bd \Hh^{n+1}_{-\ka^2}$ of the geodesic ray which issues from
	$ \nu_f(p) $ (or the asymptotic class of this ray).
\end{dfn}

\begin{lem}\label{L:sphereh2}
	Let $ N^n $ be a manifold and $ I < \ka $ be any interval \tup(not necessarily
	one that overlaps $ [-\ka,\ka] $\tup).  Suppose that $ f \colon N^n \to
	\Hh^{n+1}_{-\ka^2} $ is an immersion whose principal curvatures take on
	values in $ I $. Then $ \hat\nu_f \colon N^n \to \Ss^n_\infty $ is a local
	diffeomorphism.
\end{lem}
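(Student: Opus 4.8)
The plan is to imitate the calculation used for the flat Gauss map in Lemma~\ref{L:sphereh}\,(a), but now tracking how the endpoint at infinity of the geodesic ray through $\nu_f(p)$ varies with $p$. As before I would assume $\ka = 1$ without loss of generality and work in the half-space model, so that $\Hh^{n+1}_{-1}$ is the open upper half-space $\{x^{n+1} > 0\}$, with $\Ss^n_\infty$ identified with $\R^n \times \{0\} \cup \{\infty\}$ (the bounding hyperplane plus the point at infinity). The geodesic ray issuing from a unit tangent vector $\nu_f(p)$ is a Euclidean half-circle (or vertical half-line) orthogonal to the bounding hyperplane; its ideal endpoint $\hat\nu_f(p)$ is an explicit elementary function of the basepoint $f(p)$ and of the flat Gauss vector $\bar\nu_f(p) \in \Ss^n$. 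Concretely, writing $f = (f', f^{n+1})$ with $f' \in \R^n$ and $\bar\nu_f = (\bar\nu', \bar\nu^{n+1})$, one finds
\begin{equation*}
	\hat\nu_f(p) = f'(p) + \frac{f^{n+1}(p)}{1 - \bar\nu_f^{n+1}(p)}\,\bar\nu_f'(p) \in \R^n
\end{equation*}
when $\bar\nu_f^{n+1}(p) \neq 1$, and $\hat\nu_f(p) = \infty$ otherwise; this is just the point where the half-circle through $f(p)$ in the direction $\bar\nu_f(p)$ meets the boundary. The first step is therefore to record this formula carefully, together with the companion formula in a neighborhood of $\infty$ obtained by applying the inversion $x \mapsto x/|x|^2$, so that $\hat\nu_f$ is visibly smooth everywhere.

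The second step is to differentiate. Let $u \in TN_p$ be a principal direction for $f$ associated to the principal curvature $\la \in I$, so $\la < 1$. I would differentiate the displayed formula along a curve $\ga$ with $\dot\ga(0) = u$, feeding in the identity $d\bar\nu_p(u) = \dfrac{\bar\nu^{n+1}(p) - \la}{f^{n+1}(p)}\,df_p(u)$ from \eqref{E:derivative} (which holds for all intervals $I$, not just those away from $[-\ka,\ka]$, since its proof only used that $u$ is a principal direction). Because $d\bar\nu_p(u)$ is a scalar multiple of $df_p(u)$, all the relevant derivatives at $p$ — of $f'$, of $f^{n+1}$, and of $\bar\nu'$, $\bar\nu^{n+1}$ — are scalar multiples of $df_p(u)$ (for the first) or determined by it. Collecting terms, $d(\hat\nu_f)_p(u)$ should come out to be a scalar multiple of $df_p(u)$, and the whole point is to check that this scalar is nonzero. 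I expect the scalar to be, up to a positive factor coming from $f^{n+1}$ and $1 - \bar\nu^{n+1}$, something proportional to $1 - \la$ (the "defect" of $\la$ below the horospherical value $1$), which is strictly positive precisely because $I < 1 = \ka$. Since $TN_p$ has a basis of principal directions, this shows $d(\hat\nu_f)_p$ is a linear isomorphism onto $T(\Ss^n_\infty)_{\hat\nu_f(p)}$, i.e. $\hat\nu_f$ is a local diffeomorphism; note no compactness or closedness of $N$ is needed, consistent with the statement.

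The main obstacle is purely computational bookkeeping: the formula for $\hat\nu_f$ is a nonlinear (rational) function of $f$ and $\bar\nu_f$, so differentiating it honestly produces several terms, and one must substitute \eqref{E:derivative} and simplify without sign errors to see the miraculous cancellation down to a factor proportional to $(1-\la)$. A cleaner route, which I would try first, is to avoid the explicit boundary formula altogether: parametrize the geodesic ray from $\nu_f(p)$ by arclength and use the fact that $\hat\nu_f(p)$ is its limit point; then $d(\hat\nu_f)_p(u)$ is controlled by the Jacobi field along that ray obtained by varying $p$, whose initial data are $df_p(u)$ and $-S_p(u) = \la\, df_p(u)$ (the variation of the basepoint and of the direction). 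In a space of constant curvature $-1$ the relevant Jacobi fields are explicit ($\sinh$/$\cosh$ combinations), and the limit at the ideal endpoint of a stable-type Jacobi field with these initial data is, up to a positive factor, $(1-\la)$ times a fixed vector; it vanishes exactly when $\la = 1$, i.e. exactly in the horospherical case excluded by $I < \ka$. Either way, the key inequality is simply $\la < \ka$, which is where the hypothesis $I < \ka$ enters.
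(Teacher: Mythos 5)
Your proposal is correct, but it argues by a genuinely different mechanism than the paper. The paper's proof is one-dimensional and synthetic: it osculates the normal section of $f$ at $p$ in the direction of a principal vector $u$ by the constant-curvature curve $\eta$ (circle, hypercircle, or horocycle) of curvature $\la < \ka$, observes that the visual normals $\hat\nu_\gamma$ and $\hat\nu_\eta$ agree to first order, and reduces to the ``direct verification'' that $\hat\nu_\eta$ is immersive for any constant-curvature curve with $|\text{curvature}| < \ka$. You instead write the explicit formula $\hat\nu_f = f' + \frac{f^{n+1}}{1-\bar\nu_f^{n+1}}\bar\nu_f'$ in the half-space model and differentiate, reusing \eqref{E:derivative}; a short computation (which you sketch correctly) collapses all terms to $\frac{1-\la}{1-\bar\nu^{n+1}}\bigl(df'_p(u) + \frac{df^{n+1}_p(u)}{1-\bar\nu^{n+1}}\bar\nu'_f\bigr)$, whose scalar prefactor is nonzero exactly because $\la < \ka$. (The factor $f^{n+1}$ you anticipate actually cancels, but the essential factor $1-\la$ appears as you predict.) Your alternative Jacobi-field sketch is also correct and arguably the cleanest: the normal Jacobi field along the ray from $\nu_f(p)$ with data $J(0)=df_p(u)$, $J'(0)=-\la\,df_p(u)$ grows like $\tfrac{1}{2}(1-\la)e^t$, so the visual derivative vanishes only at $\la=\ka$. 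What each route buys: the paper's argument avoids coordinates and cleanly isolates the key 1-dimensional fact; your coordinate computation makes the argument self-contained and ties it explicitly to the earlier identity \eqref{E:derivative}; the Jacobi-field version is model-free and shows that the true hypothesis is $\ka\notin I$ rather than $I<\ka$. One small gap common to both your write-up and the paper's: concluding that $d(\hat\nu_f)_p$ is an \emph{isomorphism} from the fact that each principal direction maps to a nonzero vector requires checking linear independence of the images, i.e.\ that the linear projection $v\mapsto v' + \frac{v^{n+1}}{1-\bar\nu^{n+1}}\bar\nu'_f$ is injective on $df_p(TN_p)$; this follows quickly from $df_p(u)\perp\bar\nu_f(p)$ (in the euclidean sense), and is worth a sentence.
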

\begin{proof}
	Let $ p \in \Ss^n $ be arbitrary and $ u \in T\Ss^n_p $ be a principal
	direction. Let $ P $ be the 2-plane tangent to $ u $ and $ \nu_f(p) $, and
	let $ \hat{u} $ be a nonzero vector in $ T(\Ss^n_\infty)_{\hat\nu_f(p)} $
	tangent to $ P $. Finally, let $ \ga $ be the normal section to $ f $ at $ p
	$ in the direction of $ u $, oriented so that its unit normal at $ p $
	agrees with $ \nu_f(p) $. If $ \eta $ is the constant-curvature curve which
	osculates $ \ga $ at $ p $, then $ \eta $ may be a circle, hypercircle or
	horocycle, but in any case its curvature is smaller than $ \ka $, by
	\rref{R:normal} and the hypothesis on $ I $. Let $ \nu_\eta $ and $
	\nu_\ga $ denote the unit normals to $ \eta $ and $ \ga $, respectively, and
	$ \hat\nu_\eta $ and $ \hat\nu_\ga $ be the corresponding visual normals
	(that is, the maps into $ \Ss^n_\infty $ which assign the endpoints of the
	geodesic rays issuing from $ \nu_\ga,\,\nu_\ga $). Then $
	\hat\nu_\eta $ and $ \hat\nu_\ga $ coincide up to first order at $ f(p) $,
	by construction.  Furthermore, direct verification shows that $ \hat\nu_\eta
	$ is immersive everywhere, so that $ \hat\nu_\ga $ is immersive at $ f(p) $.
	In turn, this implies that $ d(\hat\nu_f)_p(u) $ is some nonzero multiple of
	$ \hat{u} $.  Since $ T\Ss^n_p $ admits a basis consisting of principal
	directions, we conclude that $ \hat\nu_f $ is a local diffeomorphism. 
\end{proof}

\begin{crl}\label{C:sphereh2}
	Let $ N^n $ be a closed manifold and $ I < \ka $. If $ f \colon N^n \to
	\Hh^{n+1}_{-\ka^2} $ is an immersion whose principal curvatures take on
	values in $ I $, then $ \hat\nu_f \colon N^n \to \Ss^n_\infty $ is a
	diffeomorphism.\footnote{Again, that $ N $ is diffeomorphic to $ \Ss^n $ is
		a corollary of Prop.~1 of \cite{Alexander}, but for the proof of
	\pref{P:mainh2}, we need the fact that $ \hat\nu_f $ is a diffeomorphism.}  
	\qed
\end{crl}

\begin{urmk}
	In the situation of \cref{C:sphereh2}, $ f $ need not be an embedding, but
	this does hold if the Gaussian curvature of $ f $ never vanishes. See Rmk.~1
	and Thm.~1 in \cite{Alexander}.
\end{urmk}

\begin{urmk}
	Given an immersion $ f \colon N^n \to \Hh^{n+1}_{-\ka^2} $, let $
	\check{\nu}_f \colon N^n \to \Ss^n_\infty $ denote
	the map which assigns to each $ p \in N $ the endpoint of the geodesic ray
	issuing from $ -\nu_f(p) $. It is not necessarily true that $ \check\nu_f $
	is a local diffeomorphism, even if $ I < \ka $. On the other hand, if $ I >
	-\ka $, then, in analogy with \lref{L:sphereh2}, $ \check{\nu}_f $ is a
	local diffeomorphism (while $ \hat\nu_f $ may not be). 
\end{urmk}

We will now study the topology of $ \sr F(\Hh^{n+1}_{-\ka^2};I) $ when $ I $
\tdfn{overlaps} $ [-\ka,\ka] $, i.e., when $ I $ intersects but neither contains
nor is contained in $ [-\ka,\ka] $.

\begin{lem}\label{L:translation}
	Let $ I < \ka $ overlap $ [-\ka,\ka] $ and let $ f \in \sr
	F(\Hh^{n+1}_{-\ka^2};I) $. Given $ r\geq 0 $, define
	\begin{equation*}
		f_r\colon \Ss^n \to \Hh^{n+1}_{-\ka^2},\quad f_r \colon p \mapsto
		\exp_{f(p)}(r\nu_f(p)),
	\end{equation*}
	where $ \exp $ denotes the exponential map of $ \Hh^{n+1}_{-\ka^2} $. Then:
	\begin{enumerate}
		\item [(a)] $ f_r \in \sr F(\Hh^{n+1}_{-\ka^2};I) $ for all $ r\geq 0 $.
		\item [(b)] $ \hat\nu_{f_r} = \hat\nu_f $.
		\item [(c)] The principal curvatures of $ f_r $ approach $ -\ka $
			monotonically and uniformly over $ \Ss^n $ as $ r \to +\infty $.
	\end{enumerate}
\end{lem}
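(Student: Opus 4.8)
The underlying geometric picture is that $f_r$ is obtained from $f$ by flowing a hyperbolic distance $r$ along the normal geodesics; since $I < \ka$ forces (by \rref{R:normal}) every normal section of $f$ to osculate a circle, hypercircle, or horocycle of curvature strictly less than $\ka$, such flows stay nonsingular and the surfaces ``open up'' toward horospheres. The plan is to reduce everything to a two-dimensional computation in totally geodesic planes. First I would fix $p \in \Ss^n$ and a principal direction $u \in T\Ss^n_p$ for $f$ with principal curvature $\la \in I$, and consider the totally geodesic plane $P$ tangent to $df_p(u)$ and $\nu_f(p)$. The normal geodesic $t \mapsto \exp_{f(p)}(t\nu_f(p))$ lies in $P$, which is isometric to $\Hh^2_{-\ka^2}$. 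Restricting to $P$, the displacement of $f$ along its normals is exactly the classical ``parallel curve at distance $r$'' construction for the normal section $\ga$, and there is an explicit formula for how the curvature of a parallel curve in $\Hh^2_{-\ka^2}$ evolves: if $\ga$ has curvature $\kappa_0$ at a point (with $|\kappa_0| < \ka$, measured with the appropriate sign relative to $\nu_f(p)$), then the parallel curve at signed distance $r$ has curvature
\begin{equation*}
	\kappa(r) = \ka\,\frac{\kappa_0\cosh(\ka r) - \ka\sinh(\ka r)}{\ka\cosh(\ka r) - \kappa_0\sinh(\ka r)},
\end{equation*}
which is exactly $-\ka\coth$ of (radius $+\,r$) in the circle case, $-\ka\tanh$ in the hypercircle case, and the horocyclic limit $-\ka$ in between. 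The key point is that $d(f_r)_p(u)$ remains a nonzero positive multiple of $df_p(u)$ for all $r \ge 0$ — this follows because the derivative of the parallel-curve map is the Jacobi-field factor $\ka\cosh(\ka r) - \kappa_0\sinh(\ka r)$ divided by $\ka$, which is strictly positive whenever $|\kappa_0| < \ka$ — and that $u$ is again a principal direction for $f_r$, now with principal curvature $\kappa(r)$.

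With that in hand the three claims fall out. For (a): since $u$ ranges over a basis of principal directions, $d(f_r)_p$ is a nonsingular operator (a positive diagonal rescaling of $df_p$ in the principal frame), so $f_r$ is an immersion; and the principal curvatures of $f_r$ are the numbers $\kappa(r)$ obtained from $\la \in I$. The map $\la \mapsto \kappa(r)$ is monotone and, as one checks from the formula, maps $(-\ka,\ka)$ into itself and fixes $\pm\ka$; since $I < \ka$, the image of $I$ under this map is again an interval $< \ka$, and — because $I$ overlaps $[-\ka,\ka]$, i.e.\ contains a point of $(-\ka,\ka)$ and a point $\le -\ka$ — the monotone image of $I$ is contained in $I$ itself for $r \ge 0$ (the values in $[-\ka,\ka)$ migrate toward $-\ka$, staying in the overlap region, and any value $\le -\ka$ moves up toward $-\ka$, also staying in $I$). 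Hence $f_r \in \sr F(\Hh^{n+1}_{-\ka^2};I)$. For (c): from the explicit formula, $\kappa(r) \to -\ka$ as $r \to +\infty$ for every starting value $|\kappa_0| < \ka$, and also for $\kappa_0 \le -\ka$; the convergence is monotone in $r$ and, since $I$ is bounded away from $\ka$, uniform over the compact sphere. For (b): the geodesic ray issuing from $\nu_{f_r}(p)$ is a sub-ray of the geodesic ray issuing from $\nu_f(p)$ — they lie on the same normal geodesic and point the same way — so they have the same ideal endpoint, giving $\hat\nu_{f_r} = \hat\nu_f$ pointwise.

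The main obstacle is bookkeeping the signs and the case division (circle / hypercircle / horocycle) cleanly, and in particular verifying the claim that the monotone map $\la \mapsto \kappa(r)$ sends $I$ into itself for all $r \ge 0$ — this is where the overlap hypothesis is essential and must be used carefully, since without it (e.g.\ if $I$ contained $[-\ka,\ka]$) the conclusion would fail. A secondary technical point is justifying the reduction to the totally geodesic plane $P$: one must check that the normal geodesic of $f$ at $p$ really is a geodesic of $\Hh^{n+1}_{-\ka^2}$ staying in $P$, and that the second fundamental form of $f_r$ in the direction $u$ is computed entirely within $P$ — this is standard (the normal section argument of \rref{R:normal} applied to $f_r$), but should be stated. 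Everything else is the one-variable ODE/trigonometry of parallel curves in the hyperbolic plane.
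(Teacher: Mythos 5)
Your proof is correct and arrives at exactly the same formula as the paper, namely (with $\ka=1$)
\begin{equation*}
	\la(r) \;=\; \frac{\la\cosh r - \sinh r}{\cosh r - \la\sinh r} \;=\; \frac{\la - \tanh r}{1 - \la\tanh r},
\end{equation*}
which the paper derives by writing $ f_r = \cosh r\, f + \sinh r\, \nu_f $ in the hyperboloid model, observing that the ambient Lorentz connection is flat so that $\del_{df_p(u)}\nu_f = d(\nu_f)_p(u) = -\la\,df_p(u)$, and then computing $d(f_r)_p(u)$, $\nu_{f_r}$ and $d(\nu_{f_r})_p(u)$ explicitly. Your route --- quote the parallel-curve curvature formula for $\Hh^2_{-\ka^2}$ and reduce to the totally geodesic plane $P$ --- gets the same numbers; the ``parallel curve factor'' $\ka\cosh(\ka r) - \kappa_0\sinh(\ka r)$ is precisely the paper's immersion factor $\cosh r - \la\sinh r$. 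One thing your version does more carefully than the paper is to actually close the argument for part (a): you spell out that the monotone flow $\la\mapsto\la(r)$ fixes $-\ka$, pushes $(-\ka,\ka)$ down and $(-\infty,-\ka]$ up toward $-\ka$, and hence sends $I$ into itself precisely because the overlap hypothesis forces $\inf I < -\ka \le \sup I$. The paper leaves this last step to the reader.

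The one place I would tighten your argument is the justification of the reduction to $P$. The normal section of $f_r$ at $p$ in the direction $d(f_r)_p(u)$ does lie in $P$ (since $d(f_r)_p(u)$ and $\nu_{f_r}(p)$ are parallel transports of $df_p(u)$ and $\nu_f(p)$ along a geodesic of $P$), but this normal section is \emph{not} literally the parallel curve of the normal section of $f$, because $\nu_f$ at neighbouring points of the section need not lie in $P$. What does make the reduction legitimate is the constant-curvature Jacobi-field argument: the variation field $r\mapsto d(f_r)_p(u)$ is a Jacobi field along the normal geodesic with $J(0)=df_p(u)$, $J'(0)=-\la\,df_p(u)$, and in constant curvature it stays a scalar multiple $j(r)$ of a parallel vector, with $j'' - \ka^2 j = 0$; the new principal curvature is $-j'(r)/j(r)$. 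This is the same ODE that governs parallel curves in $\Hh^2_{-\ka^2}$, which is why the 2D formula applies, but the correct reason is ``Jacobi fields in constant curvature decouple,'' not ``normal sections of $f_r$ are parallel curves of normal sections of $f$.'' You flagged this as a point to be checked; I would replace the normal-section justification with the Jacobi-field one. Also, in your opening you restrict the parallel-curve formula to $|\kappa_0|<\ka$, whereas the hypothesis $I<\ka$ permits $\kappa_0\le-\ka$; you correct this later, but it would be cleaner to state from the outset that the factor $\ka\cosh(\ka r) - \kappa_0\sinh(\ka r)$ is positive for all $\kappa_0<\ka$ and $r\ge 0$.
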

\begin{proof}
	Again, it suffices to consider the case where $ \ka = 1 $. We will work in
	the hyperboloid model of $ \Hh^{n+1} $. Then $ f_r $ may be expressed as
	\begin{equation*}
		f_r \colon p \mapsto \cosh r \,f(p) + \sinh r\, \nu_f(p) \qquad (p \in
		\Ss^n).
	\end{equation*}
	Let $ u \in T\Ss^n_p $ be a principal direction for $ f $, associated to the
	principal curvature $ \la \in I $. The Christoffel symbols of the Lorentz
	metric on the ambient space $ \R^{n+1,1} \sups \Hh^{n+1} $ are identically
	equal to 0. Therefore, if $ \nu_f $ is regarded as a map into $ \R^{n+1,1}
	$ as well as a vector field along $ f $, then 
	\begin{equation*}
		\del_{df_p(u)} \nu_f = d(\nu_f)_p(u) = -\la df_p(u),
	\end{equation*}
	whence
	\begin{equation}\label{E:hyper}
		d(f_r)_{p}(u) = \cosh r\, df_p(u) + \sinh r\, d(\nu_f)_p(u) = (\cosh r -\la
		\sinh r) df_p(u).
	\end{equation}
	Now $ \la < 1 $ by hypothesis, hence the factor multiplying $ df_p(u) $ in
	\eqref{E:hyper} is positive for all $ r \geq 0 $. This implies that $ f_r $
	is an immersion.  Furthermore, it can be verified directly that
	\begin{equation}\label{E:nufr}
		\nu_{f_r}(p) = \sinh r\, f(p) + \cosh r\, \nu_f(p).
	\end{equation}
	The geodesic ray issuing from $ \nu_{f_r}(p) $ is thus parametrized by 
	\begin{equation*}
		t \mapsto \cosh (r + t) f(p) + \sinh (r + t)\, \nu_f(p) \qquad (t \geq
		0),
	\end{equation*} 
	so that its image is contained in that of the geodesic ray which issues from
	$ \nu_f(p) $. In particular, $ \hat\nu_{f_r} = \hat\nu_f $, which proves
	(b).  Finally, combining \eqref{E:hyper} and \eqref{E:nufr}, one deduces
	that
	\begin{equation*}
		d(\nu_{f_r})_p(u) = \frac{\tanh r -\la}{1 - \la \tanh r}d(f_r)_p(u).
	\end{equation*}
	It follows that $ u $ is a principal direction for $ f_r $ at $ p $
	associated to the principal curvature 
	\begin{equation*}
		\la(r) = \frac{\la - \tanh r}{1 - \la \tanh r}.
	\end{equation*}
	This has the following behavior with respect to $ r $: If $ \la = -1 $, then
	$ \la(r) = -1 $ for all $ r \geq 0 $; if $ \la < -1 $, say $ \la = -\coth
	(\ell) $ for some $ \ell > 0 $, then $ \la(r) = -\coth (\ell + r) $; and
	if $ \la \in (-1,1) $, say $ \la = \tanh(\ell) $ for some $ \ell \in \R $,
	then $ \la(r) = \tanh(\ell - r) $. In any case, $ \la(r) $ is monotone with
	respect to $ r $ and approaches $ -1 $ as $ r \to +\infty $. Moreover, the
	convergence is uniform over $ \Ss^n $ by compactness.
\end{proof}

\begin{prp}\label{P:mainh2}
	Let $ I $ overlap $ [-\ka,\ka] $ and let $ f_0 \in \sr
	F(\Hh^{n+1}_{-\ka^2};I) $ be arbitrary. Then
	\begin{alignat*}{9}
		&\Psi \colon \Diff_+(\Ss^n) \to \sr F(\Hh^{n+1}_{-\ka^2};I),\quad & & g
		\mapsto f_0 \circ g \qquad \text{and}\\
		& \Phi \colon \sr F(\Hh^{n+1}_{-\ka^2};I) \to \Diff_{\pm}(\Ss^n),\quad & & 
		\begin{cases}
			f \mapsto \hat\nu_f & \text{ if $ I < \ka $} \\
			f \mapsto \check\nu_f & \text{ if $ I > -\ka $}
		\end{cases}
	\end{alignat*}
	are weak homotopy equivalences.
\end{prp}

\begin{proof}
	As before, no generality is lost in assuming that $ \ka = 1 $ and $ I < 1 $.
	We will work in the Poincar\'e ball model; $ \Ss^n_\infty = \bd \Hh^{n+1} $
	will thereby be identified with the unit sphere $ \Ss^{n} \subs \R^{n+1} $
	about the origin.  Let $ \mu < -1 $ be an arbitrary element of $ I $, and
	let
	\begin{equation*}
		\sig \colon \Ss^n \to \Hh^{n+1} \subs \R^{n+1},\quad p \mapsto
		-\mu^{-1} p.
	\end{equation*}
	The image of $ \sig $ is a sphere of hyperbolic radius $
	\arctanh\big(-\mu^{-1}\big) $, and its principal curvatures are everywhere
	equal to $ \mu $.  Because in the ball model the geodesics through the
	origin are radial segments,
	\begin{equation}\label{E:visual}
		\hat\nu_{\sig \circ g} = \hat\nu_{\sig} \circ g = \id_{\Ss^n} \circ g =
		g \quad \text{for any $ g \in \Diff_+(\Ss^n) $}.
	\end{equation}
	Define 
	\begin{equation*}
		\Psi_\sig \colon \big(\Diff_+(\Ss^n),\id_{\Ss^n}\big) \to 
		\big(\sr F(\Hh^{n+1};I),\sig\big)\quad \text{by}
		\quad \Psi_\sig \colon g \mapsto \sig \circ g
	\end{equation*}
	and let $ \Phi \colon \big(\sr F(\Hh^{n+1};I),\sig\big)
	\to \big(\Diff_+(\Ss^n),\id_{\Ss^n}\big) $ be as in the statement.
	By \eqref{E:visual}, $ \Phi \circ \Psi_\sig = \id_{\Diff_+(\Ss^n)} $. Let $
	k \geq 0 $ be an arbitrary integer and
	\begin{equation*}
		F \colon \big( \Ss^k,-e_{k+1} \big)	\to \big( \sr F(\Hh^{n+1};I), \sig
		\big).
	\end{equation*}
	We will construct a basepoint-preserving homotopy between $
	\Psi_\sig \circ \Phi \circ F $ and $ F $. 

	Denote by $ f^z $ the immersion $ F(z) \in \sr F(\Hh^{n+1};I) $ ($ z \in
	\Ss^k $). Define $ r \colon [0,1] \to [0,+\infty] $ by $ r(s) =
	\tan(\frac{\pi}{2}s)$.  By \lref{L:translation}\?(a), the principal
	curvatures of $ f^z_{r(s)} $ take on values inside $ I $ for all $ s < 1 $.
	However, $ f^z_{r(1)} $ is not a valid immersion into $ \Hh^{n+1} $, for it
	coincides with $ \hat\nu_{f^z} \colon \Ss^n \to \Ss^n_\infty $.  This can be
	corrected as follows.  For fixed $ \tau \in \big[ \frac{1}{2},1 \big) $, let
	$ \tau(s)f^z_{r(s)} $ denote the composition of $ f^z_{r(s)} $ with an
	euclidean homothety, centered at 0, by 
	\begin{equation*}
		\tau(s) = 1+s(\tau - 1)	\quad (s\in [0,1]).
	\end{equation*}
	Then $ F_s \colon z \mapsto \tau(s)f^z_{r(s)}$ defines a
	homotopy in $ \sr F(\Hh^{n+1}) $ from $ F_0 = F $ to 
	\begin{equation*}
		F_1 = -\tau \mu \big(\Psi_\sig \circ \Phi \circ F\big).
	\end{equation*}
	By \lref{L:translation}\?(c) and compactness of $ \Ss^k $, the principal
	curvatures of $ f^z_{r(s)} $ converge uniformly to $ -1 $ over $ \Ss^k
	\times \Ss^n $ as $ s \to 1 $, hence it is possible to choose $ \tau $ close
	enough to 1 so that this homotopy takes place inside $ \sr F(\Hh^{n+1};I) $.
	Since $ I $ is convex, the homotopy can then be extended using homotheties
	to $ s \in [0,2] $ so that $ F_2 = \Psi_{\sig} \circ \Phi \circ F $.
	Finally, the loop $ s \mapsto F_s(-e_{k+1}) $ described by the basepoint in
	$ \sr F(\Hh^{n+1};I) $ is null-homotopic, since it is of the form $ s
	\mapsto \rho(s) \sig $ for some continuous $ \rho \colon [0,2] \to
	(0,+\infty)  $ with $ \rho(0) = \rho(2) = 1 $.  Consequently, the homotopy
	can be further modified to become basepoint-preserving. 
	
	As $ \Psi_{\sig}(g) = \sig \circ g $ and $ \Phi(\sig \circ g) = g $ for any
	$ g \in \Diff_+(\Ss^n) $, we conclude that $ \Psi_\sig $ and $
	\Phi $ induce the identity on $ \pi_0 $ and isomorphisms on the respective $ k $-th
	homotopy groups based at $ g $ and $ \sig \circ g $ for all $ g $. That $
	\Phi_\ast $ is an isomorphism on homotopy groups based at an arbitrary
	basepoint $ f_0 $ now follows from the fact that
	\begin{equation*}
		f_0 \iso \sig \circ \hat\nu_{f_0} \quad \text{for all $ f_0 \in \sr
		F(\Hh^{n+1};I) $}.
	\end{equation*}
	The same relation implies that if $ \Psi = \Psi_{f_0} $ is defined as
	in the statement, then $ \Psi $ is a weak homotopy equivalence, as it is
	homotopic to the composition $ \Psi_{\sig \circ \hat\nu_{f_0}} = \Psi_\sig
	\circ L_{\hat\nu_{f_0}} $ of a weak homotopy equivalence with a
	homeomorphism (where $ L_g $ denotes left multiplication by $ g $ in 
	$ \Diff_+(\Ss^{n}) $).
\end{proof}


\section{Locally convex hypersurfaces of $ \E^{n+1} $}\label{S:euclidean}

In this section the analogues of the results of \S\ref{S:hyperbolic1} 
for euclidean space will be stated. The proofs are also analogous, but
easier since covariant differentiation is simpler.  The ``modified'' Gauss map
of an immersion $ f \colon \Ss^n \to \E^{n+1} $ will be $ \nu_f $ itself, but
regarded as a map $ \Ss^n \to \Ss^n $ instead of $ \Ss^n \to T\E^{n+1} $;
cf.~\rref{R:derivative}.

\begin{lem}\label{L:sphere}
	Let $ N^n $ be a closed manifold.  Suppose that $ f \colon N^n \to
	\E^{n+1} $ is an immersion whose Gaussian curvature never vanishes. Then:
	\begin{enumerate}
		\item [(a)] $ \nu_f \colon N^n \to \Ss^n$ is a diffeomorphism.
		\item [(b)] The principal curvatures of $ f $ are either all positive or
			all negative.
		\item [(c)] $ f $ is an embedding.
	\end{enumerate}
\end{lem}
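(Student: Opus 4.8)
The plan is to mimic the proof of Lemma \ref{L:sphereh}, but with a significant simplification: in $\E^{n+1}$ the Gauss map $\nu_f$ takes values directly in $\Ss^n$ after the canonical trivialization (Remark \ref{R:derivative}), and by \eqref{E:shapeh} one has $S = -df^{-1}\circ d\nu$ with no Christoffel-symbol corrections at all. For part (a), I would fix $p\in N$ and a principal direction $u\in TN_p$ associated to the principal curvature $\la$. Then $d\nu_p(u) = -df_p(S_p(u)) = -\la\, df_p(u)$, which is a nonzero multiple of $df_p(u)$ since the Gaussian curvature $K_f(p) = \la_1(p)\cdots\la_n(p)$ is nonzero, so $\la\neq 0$. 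As $TN_p$ has a basis of principal directions, $d\nu_p$ is a linear isomorphism, hence $\nu_f$ is a local diffeomorphism. Compactness of $N$ makes it a covering map onto $\Ss^n$, and since $\Ss^n$ is simply-connected this covering is a diffeomorphism. (One should note here that nonvanishing $K_f$ forces $N$ to be orientable, consistent with our standing conventions; alternatively the argument shows $N$ is diffeomorphic to $\Ss^n$ a posteriori.)

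For part (b), I would argue exactly as in Lemma \ref{L:sphereh}\,(b), replacing the hyperbolic distance function by the euclidean one. Pick any point $o\in\E^{n+1}$ not in $f(N)$ — which exists since $f(N)$ is compact with empty interior — and let $p\in N$ maximize $q\mapsto \abs{f(q)-o}$, with maximum value $r$. Comparing the second fundamental form of $f$ at $p$ with that of the round sphere of radius $r$ centered at $o$ (which touches $f(N)$ at $f(p)$ from outside), one finds that all principal curvatures of $f$ at $p$ have the same sign. Since $K_f$ never vanishes, no principal curvature can ever be zero, so by continuity and connectedness of $N$ each principal curvature keeps a constant sign; combined with the fact that they agree in sign at $p$, they all have the same constant sign on all of $N$. (This is precisely Remark \ref{R:nonvanishing} in the euclidean setting.)

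For part (c) I would follow the reference cited in Lemma \ref{L:sphereh}, namely \cite{Petersen}, p.~96, of which the proof of \ref{L:sphereh}\,(c) was already announced to be an adaptation. It suffices to prove $f$ injective. Fix $p\in N$; after composing with a rigid motion assume $\nu_f(p) = e_{n+1}$. The height function $\de\colon q\mapsto \gen{f(q),e_{n+1}}$ attains its global maximum and minimum on $N$ by compactness, so has at least two critical points; at any critical point $\nu_f = \pm e_{n+1}$, and by injectivity of $\nu_f$ from part (a) there are exactly two such points, one of which is $p$. By part (b) the hypersurface is locally strictly convex, so $p$ is either the unique global maximum or the unique global minimum of $\de$ (say the maximum, after replacing $e_{n+1}$ by $-e_{n+1}$ if needed); in particular $\de^{-1}(\de(p)) = \{p\}$. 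If $f(p')=f(p)$ then $\de(p')=\de(p)$, forcing $p'=p$. Hence $f$ is injective, and being an injective immersion of a compact manifold, it is an embedding. The main obstacle, such as it is, lies in part (c): one must be a little careful to rule out degenerate configurations (e.g.\ that the image is not contained in a hyperplane, which is automatic since a hyperplane has vanishing Gaussian curvature) and to correctly identify which critical point is which, but this is routine and entirely parallel to the hyperbolic case already treated.
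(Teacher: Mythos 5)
Your proposal is correct and takes the same route the paper intends: the paper simply cites Petersen, p.\,96 for parts (a) and (c) and leaves (b) to the reader with a pointer to Lemma~\ref{L:sphereh}\,(b), while you write out in full exactly those arguments (the covering-map argument via $d\nu_p(u)=-\la\,df_p(u)$ for (a), the farthest-point sphere-comparison for (b), and the two-critical-point height-function argument for (c)). One small point of phrasing in (b): individual principal curvatures $\la_k$ need not vary continuously as functions on $N$, so rather than "each principal curvature keeps a constant sign" one should say the number of negative eigenvalues of $S$ (counted with multiplicity) is locally constant because no eigenvalue crosses $0$; but this is clearly what you meant, and it is the same level of rigor as the paper's own wording in Lemma~\ref{L:sphereh}\,(b).
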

\begin{proof}
	For proofs of (a) and (c), see \cite{Petersen}, p.~ 96. The proof of (b)
	will be left to the reader (compare the proof of \lref{L:sphereh}\?(b)).
\end{proof}

\begin{lem}\label{L:sign}
	Let $ I $ be an interval not containing $ 0 $ and $ f \in \sr F(\E^{n+1};I)
	$. Then $ \nu_f \in \Diff_{+}(\Ss^n) $ unless $ n $ is odd and $ I > 0 $, in
	which case $ \nu_f \in \Diff_{-}(\Ss^n) $; compare \eref{E:conf}\?(a) and (b).
\end{lem}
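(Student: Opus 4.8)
The plan is as follows.  Since $I$ does not contain $0$, every principal curvature of $f$ is nonzero, so $K_f$ never vanishes and \lref{L:sphere}\?(a) applies: $\nu_f$ is a diffeomorphism of $\Ss^n$.  Because $\Ss^n$ is connected, the Jacobian determinant of $\nu_f$ is either everywhere positive or everywhere negative, so $\nu_f$ lies in $\Diff_+(\Ss^n)$ or $\Diff_-(\Ss^n)$ accordingly, and it suffices to compute the sign of this determinant at a single point $p \in \Ss^n$.

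Fix $p$ and pick a positively oriented orthonormal basis $(u_1,\dots,u_n)$ of $T\Ss^n_p$ consisting of principal directions, say $S_p(u_k) = \la_k u_k$; this is possible since $S_p$ is symmetric, and each $\la_k$ lies in $I$.  By \rref{R:derivative}, $d\nu_p = -df_p \circ S_p$, so $d\nu_p(u_k) = -\la_k\, df_p(u_k)$ for every $k$.  The defining property of the Gauss map (recall that $\Ss^n$ carries the metric induced by $f$, so $df_p$ sends $(u_1,\dots,u_n)$ to an orthonormal frame) says that $\big( df_p(u_1),\dots,df_p(u_n),\nu_f(p) \big)$ is a positively oriented orthonormal frame of $\R^{n+1}$.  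By the orientation convention \rref{R:orientation} on $\Ss^n$, $\nu_f$ preserves orientation at $p$ exactly when $\big( d\nu_p(u_1),\dots,d\nu_p(u_n),\nu_f(p) \big)$ is positively oriented in $\R^{n+1}$; since this frame is obtained from the previous one by scaling the $k$-th vector by $-\la_k$, the condition is $\prod_{k=1}^{n}(-\la_k) = (-1)^n K_f(p) > 0$.

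It remains to read off the sign.  As $I$ is an interval avoiding $0$, either $I < 0$ or $I > 0$, so all $\la_k$ share the sign of $I$ (this is also \lref{L:sphere}\?(b)).  If $I < 0$, then every $-\la_k > 0$, the product is positive, and $\nu_f \in \Diff_+(\Ss^n)$ regardless of the parity of $n$.  If $I > 0$, then $\prod_k(-\la_k) = (-1)^n \prod_k \la_k$ has the sign of $(-1)^n$, whence $\nu_f \in \Diff_+(\Ss^n)$ for $n$ even and $\nu_f \in \Diff_-(\Ss^n)$ for $n$ odd, as claimed; \eref{E:conf}\?(a) and (b) exhibit the two cases.

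The only delicate point is the orientation bookkeeping in the middle paragraph: one must combine the convention \rref{R:orientation} for orienting $\Ss^n$ with the defining convention for the Gauss map and keep track of the minus sign in $d\nu_p = -df_p \circ S_p$.  Once these are pinned down the argument reduces to a single-point determinant computation, so there is no substantial obstacle.
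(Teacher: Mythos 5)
Your proof is correct and uses essentially the same idea as the paper: via the relation $d\nu_f = -df_p\circ S_p$ of \rref{R:derivative} (equation \eqref{E:shapeh}), the sign of $\det(d\nu_f)$ equals the sign of $(-1)^n K_f$, which is read off from the common sign of the principal curvatures. You merely spell out the orientation bookkeeping (via the defining property of the Gauss map and Convention \ref{R:orientation}) in more detail than the paper, which simply cites \eqref{E:shapeh}.
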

\begin{proof}
	If $ I < 0 $, then all principal curvatures of $ f $ are negative, so by
	\eqref{E:shapeh} $ d\nu_f $ is orientation-preserving, i.e., $ \deg(\nu_f) =
	1 $.  If $ I > 0 $, then $ \deg(\nu_f) = (-1)^n $ by \eqref{E:shapeh} again.
\end{proof}

\begin{prp}\label{P:main}
	Let $ I $ be an interval not containing $ 0 $ and $ f_0 \in \sr
	F(\E^{n+1};I) $ be arbitrary. Then
	\begin{equation*}
		\Psi \colon \Diff_{+}(\Ss^n) \to \sr F(\E^{n+1};I),\quad g \mapsto f_0
		\circ g
	\end{equation*}
	is a homotopy equivalence. In the other direction, $ \Phi \colon f
	\mapsto \nu_f $ is a homotopy equivalence between $ \sr F(\E^{n+1};I) $ and $
	\Diff_{\pm}(\Ss^n) $, the sign being positive unless $ I > 0 $ and $ n $ is
	odd.
\end{prp}

\begin{proof}
	By \lref{L:switch}, it can be assumed that $ I < 0 $. Let $ \iota \colon
	\Ss^n \to \E^{n+1} $ denote set inclusion. Let $ \mu \in I $ be arbitrary
	(in particular, $ \mu < 0 $). Then 
	\begin{equation*}
		\sig = -\mu^{-1} \iota \colon \Ss^n \to \E^{n+1},
	\end{equation*}
	that is, $ \iota $ followed by a homothety of ratio $ -\mu^{-1} $, has
	principal curvatures identically equal to $ \mu $; see \eref{E:conf}\,(a).
	Define
	\begin{equation*}\label{E:Psi}
		\Psi_\sig \colon \Diff_+(\Ss^n) \to \sr F(M;I) \quad \text{by} \quad
		\Psi_\sig(g) = \sig \circ g.
	\end{equation*}
	Then $ \Phi \circ \Psi_\sig = \id_{\Diff_+(\Ss^n)}$ by
	\eqref{E:nug}.
	To show that $ \Psi_\sig \circ \Phi \iso \id_{\sr F(M;I) } $, consider the
	homotopy
	\begin{equation*}
		(s,f) \mapsto f_s=(1-s)f + s(\sig \circ \nu_f),\quad
		\text{where $ s \in [0,1] $, $ f \in \sr F(M;I) $.}
	\end{equation*}
	Then $ f_0 = f $ and  $ f_1 = \Psi_\sig \circ \Phi(f) $.  Moreover, for all
	$ s \in [0,1] $:
	\begin{enumerate}
		\item [(i)] $ f_s $ is an immersion; 
		\item [(ii)] $ \nu_{f_s} = \nu_f $;
		\item [(iii)] the principal curvatures of $ f_s $ lie in $ I $.
	\end{enumerate}

	To prove these claims, fix $ p \in M $ and let $ u \in T\Ss^n_p $ be a
	principal direction for $ f $, corresponding to the principal curvature $
	\la < 0 $. Then 
	\begin{equation*}
		d(\sig \circ \nu_f)_p(u) = -\mu^{-1}d(\nu_f)_p(u) =
		\mu^{-1}\la df_p(u).
	\end{equation*}
	Consequently,
	\begin{equation}\label{E:dfs}
		(df_s)_p(u) = \big[(1-s)+s\mu^{-1}\la\big]df_p(u) =
		\mu^{-1}\big[(1-s)\mu + s\la \big]df_p(u)
	\end{equation}
	is a positive multiple of $ df_p(u) $, as $ I < 0 $ by hypothesis.
	Since $ f $ is an immersion, so is $ f_s $. This proves
	(i), and (ii) is also an immediate consequence of \eqref{E:dfs}.

	Let $ S_{f_s} $ be the shape operator of $ f_s $. Then:
	\begin{alignat*}{3}
		S_{f_s}(u) & = -(df_s)_p^{-1} \circ (d\nu_{f_s})_p(u) =  -(df_s)_p^{-1}
		\circ (d\nu_{f})_p(u) = \la (df_s)_p^{-1} \big(df_p(u)\big) \\
		& = \big[(1-s)\la^{-1}+s\mu^{-1}\big]^{-1}u.
	\end{alignat*}
	The set $ I^{-1} = \set{t^{-1}}{t \in I} $ is an interval, hence convex.
	Thus the factor multiplying $ u $ lies in $ (I^{-1})^{-1} = I $.
	This establishes (iii), showing that $ \Phi $ and $ \Psi_\sig $ are homotopy
	inverses. The same argument as in the \hyperlink{last}{last paragraph} of
	the proof of \tref{P:mainh} now implies that the map $ \Psi $ described in the
	statement is also a homotopy equivalence.
\end{proof}

\begin{rmk}\label{R:Banach}
	If $ I $ is an open interval, then $ \sr F(M;I) $ is a metrizable Fr\'echet
	manifold, as is $ \Diff(\Ss^{n}) $. It is known since the 1960's
	\cite{Henderson, Palais} that a weak homotopy equivalence between
	(infinite-dimensional) manifolds of this type is actually a homotopy
	equivalence, and that homotopy equivalence implies homeomorphism within this
	class.  
\end{rmk}

\begin{crl}\label{C:Gaussian}
	Let $ M^{n+1} $ be a simply-connected space form of nonpositive curvature and let
	$ \sr N $ denote the space of all closed immersed \tup(resp.~embedded\tup)
	hypersurfaces of $ M $ whose Gaussian curvature never vanishes. Then
	\begin{equation*}
		\Psi \colon \Diff(\Ss^n) \to \sr N,\quad g \mapsto f \circ g \qquad ( f
		\in \sr N  \text{ arbitrary})
	\end{equation*}
	is a homotopy equivalence. In particular, $ \sr N $ is
	homeomorphic to $ \Diff(\Ss^n) $ by \rref{R:Banach}. 
\end{crl}
\begin{proof}
	That ``immersed'' and ``embedded'' are
	interchangeable in this situation follows from Thm.~1 in \cite{Alexander}.
	By \rref{R:nonvanishing} and \lref{L:sphere}\?(b), a closed hypersurface of
	$ M $ has nonvanishing Gaussian curvature if and only if its principal
	curvatures are all positive or all negative.
	Thus, if $ I_- = (-\infty,0) $ and $ I_+ = (0,+\infty) $, then 
	\lref{L:sphereh} and \lref{L:sphere} imply that
	\begin{equation*}
		\sr N = \sr F(M;I_-) \du \sr F(M;I_+) 
	\end{equation*}
	By \eqref{E:related},
	\pref{P:mainh2} and \pref{P:main}, $ \Psi $ is a (weak) homotopy equivalence
	onto one of these subspaces when restricted to each of $ \Diff_{\pm}(\Ss^n)
	$.  As it also induces a bijection on $ \pi_0 $, it is a homotopy
	equivalence.
\end{proof}


\section{Hypersurfaces of space forms of nonpositive
curvature}\label{S:space}

\begin{dfn}\label{D:Fast}
	Let $ M^{n+1} $ be a Riemannian manifold and fix $ q \in M $. We denote by $
	\sr F_*(M) $ and $ \sr F_*(M;I) $ the subspaces of $ \sr F(M) $ and $ \sr
	F(M;I) $, respectively, consisting of those immersions mapping $ -e_{n+1}
	\in \Ss^n $ to $ q \in M $.  
\end{dfn}

Let $ \Iso_+(M) $ denote the group of orientation-preserving isometries of $ M
$.

\begin{lem}\label{L:independent}
	If $ \Iso_+(M) $ acts transitively on $ M $, then $ \sr F_*(M;I) $
	is independent of the choice of basepoint used to define it; that is,
	different choices yield homeomorphic spaces.\qed
\end{lem}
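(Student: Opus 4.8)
The plan is to show that post-composition with a suitably chosen isometry of $M$ furnishes the desired homeomorphism. Concretely, given two admissible basepoints $q,q'\in M$, I would use the transitivity of $\Iso_+(M)$ on $M$ to pick $\phi\in\Iso_+(M)$ with $\phi(q)=q'$, and consider
\begin{equation*}
	L_\phi\colon \sr F(M)\to \sr F(M),\qquad f\mapsto \phi\circ f.
\end{equation*}
Post-composition with the fixed diffeomorphism $\phi$ is continuous for the $C^\infty$-topology, $\phi^{-1}$ again lies in $\Iso_+(M)$, and $L_{\phi^{-1}}$ is a two-sided inverse of $L_\phi$; hence $L_\phi$ is a self-homeomorphism of $\sr F(M)$ which obviously carries immersions to immersions.

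The key step is to check that $L_\phi$ preserves the shape operator of every $f$, so that it restricts to a self-homeomorphism of $\sr F(M;I)$. Here I would argue as in \rref{R:action}: since $\phi$ is an isometry, $d\phi$ preserves orthonormality, and since it is orientation-preserving it takes positively oriented frames to positively oriented frames, so comparison with the defining property of the Gauss map in \S\ref{S:basic} gives $\nu_{\phi\circ f}=d\phi\circ\nu_f$ as a section of $TM$ along $\phi\circ f$. Because an isometry intertwines the Levi-Civita connections, $\del_{d(\phi\circ f)_p(u)}\nu_{\phi\circ f}=d\phi\big(\del_{df_p(u)}\nu_f\big)$; substituting this together with $d(\phi\circ f)_p=d\phi_{f(p)}\circ df_p$ into \eqref{E:shape} yields $S_{\phi\circ f}=S_f$, so $\phi\circ f$ has exactly the same principal curvatures as $f$ at corresponding points. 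In particular $L_\phi$ restricts to a self-homeomorphism of $\sr F(M;I)$ for every interval $I$.

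Finally, since $(\phi\circ f)(-e_{n+1})=\phi\big(f(-e_{n+1})\big)$, the map $L_\phi$ sends the set of immersions with $f(-e_{n+1})=q$ precisely onto the set of those with $f(-e_{n+1})=q'$; restricting it therefore produces a homeomorphism between the version of $\sr F_*(M;I)$ defined using $q$ and the one defined using $q'$, which is the assertion of the lemma.

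I expect the only genuine obstacle to be the middle paragraph, i.e.\ verifying that $L_\phi$ preserves principal curvatures. It is routine but it is the one place where the hypothesis that the isometry be orientation-preserving is essential: an orientation-reversing isometry would flip the sign of the Gauss map, hence of the shape operator, and would send $\sr F(M;I)$ to $\sr F(M;-I)$ rather than to itself (cf.~\rref{R:action}). The verification rests on two standard facts, namely that isometries commute with covariant differentiation and, being orientation-preserving, act equivariantly on Gauss maps.
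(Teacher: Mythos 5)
Your proof is correct and complete. The paper gives no proof at all for this lemma (it is stated with a terminal \qed), but the post-composition-by-isometry argument you spell out is exactly the one the author evidently has in mind, consistent with the analogous map $\phi(\ga,f)=\ga\circ f$ used in the proof of \lref{L:base} and with the discussion of how pre- and post-composition affect the shape operator in \rref{R:action}. Your remark on why the isometry must be orientation-preserving is also accurate: an orientation-reversing isometry would flip the sign of $\nu_f$ and hence of the principal curvatures, carrying $\sr F(M;I)$ to $\sr F(M;-I)$ instead.
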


\begin{lem}\label{L:base}
	Suppose that $ \Ga $ is a subgroup of $ \Iso_+(M) $ which acts simply
	transitively on $ M $. Assume moreover that the map $ \Ga \to M $, $ \ga
	\mapsto \ga p $ is open for some (and hence all) $ p \in M $. Then $ \sr
	F(M;I) $ is homeomorphic to $ \Ga \times \sr F_{\ast}(M;I) $ for any
	interval $ I $.
\end{lem}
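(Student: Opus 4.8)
The plan is to exhibit an explicit homeomorphism $\sr F(M;I) \to \Ga \times \sr F_*(M;I)$ using the simply transitive action of $\Ga$ on $M$. Since $\Ga$ acts simply transitively, for each $f \in \sr F(M;I)$ there is a unique $\ga_f \in \Ga$ carrying the fixed basepoint $q$ to the point $f(-e_{n+1})$; concretely, $\ga_f$ is the unique element of $\Ga$ with $\ga_f \cdot q = f(-e_{n+1})$. Define
\begin{equation*}
	\Theta \colon \sr F(M;I) \to \Ga \times \sr F_*(M;I),\qquad \Theta(f) = \bigl(\ga_f,\ \ga_f^{-1}\circ f\bigr).
\end{equation*}
Here $\ga_f^{-1}\circ f$ sends $-e_{n+1}$ to $\ga_f^{-1}\cdot f(-e_{n+1}) = q$, so it lies in $\sr F_*(M)$; and because $\ga_f^{-1} \in \Iso_+(M)$ is an orientation-preserving isometry, pre-composition by it preserves the shape operator (up to the sign coming from orientation, which is $+$ here), hence $\ga_f^{-1}\circ f$ still has principal curvatures in $I$ and so lies in $\sr F_*(M;I)$. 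The candidate inverse is
\begin{equation*}
	(\ga, h) \mapsto \ga \circ h,
\end{equation*}
which visibly has the same principal curvatures as $h$, hence lands in $\sr F(M;I)$; the two composites are the identity by uniqueness of $\ga_f$.

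It remains to check that $\Theta$ and its inverse are continuous. The inverse map $(\ga,h)\mapsto \ga\circ h$ is continuous because composition (here, acting by an isometry depending continuously on $\ga$) is continuous in the $C^\infty$-topology. For $\Theta$ itself, the second component is continuous once the first is, so the crux is continuity of $f \mapsto \ga_f$. The evaluation map $f \mapsto f(-e_{n+1})$ from $\sr F(M;I)$ to $M$ is continuous (evaluation is continuous in the $C^\infty$-topology), and $\ga_f$ is the preimage of $f(-e_{n+1})$ under the orbit map $\ga \mapsto \ga\cdot q$. This orbit map is a continuous bijection $\Ga \to M$, and the hypothesis that it is open makes it a homeomorphism; hence its inverse is continuous, and $f \mapsto \ga_f$ is the composition of two continuous maps.

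The main (and essentially only) obstacle is precisely the continuity of $f\mapsto\ga_f$: a priori the orbit map $\ga\mapsto\ga\cdot q$, though a continuous bijection, need not have continuous inverse, which is exactly why the openness hypothesis is included in the statement. Once that is granted, everything else is a formal verification. I would also remark that the decomposition is natural enough that the same argument shows $\Theta$ is equivariant for the natural left $\Ga$-actions (on $\sr F(M;I)$ by post-composition and on $\Ga\times\sr F_*(M;I)$ by left translation in the first factor), though this is not needed for the statement.
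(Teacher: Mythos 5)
Your argument is correct and is essentially the paper's proof, which simply defines $\phi(\ga,f)=\ga\circ f$ and remarks that it is "easily checked" to be a homeomorphism; you have supplied the routine verification, in particular using the openness hypothesis to get continuity of $f\mapsto\ga_f$. (One small wording slip: $\ga_f^{-1}\circ f$ is post-composition of $f$ with the isometry $\ga_f^{-1}$, not pre-composition, but the conclusion about principal curvatures is unaffected.)
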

\begin{proof}
	Define $ \phi \colon \Ga \times \sr F_\ast(M;I) \to \sr F(M;I) $ by $
	\phi(\ga,f) = \ga \circ f $.  It is easily checked that $ \phi $ is a
	homeomorphism.
\end{proof}

\begin{crl}\label{C:base}
	If $ M^{n+1} $ is a simply-connected space form of nonpositive curvature,
	then $ \sr F(M;I) $ is homeomorphic to $ M \times \sr F_\ast(M;I) $.
\end{crl}
\begin{proof}
	If $ M = \E^{n+1} $, then take $ \Ga = \R^{n+1} $ acting by translations on
	$ \E^{n+1} $.
	If $ M = \Hh^{n+1}_{-\ka^2} $, then take $ \Ga $ to be the image of the
	monomorphism $ \R^n \sd \R^+ \to \Iso_+(M) $, $ (a,t) \mapsto
	\ga_{(a,t)} $, where, in the half-space model,
	\begin{equation*}
		\ga_{(a,t)} \colon p \mapsto tp + (a,0) \qquad (a \in \R^n,~t \in
		\R^+,~p \in \Hh^{n+1}_{-\ka^2} \subs \R^{n+1}).\qedhere
	\end{equation*}
\end{proof}

In contrast, unless $ n=0,1 $ or $ 3 $, there exists no simply-transitive
subgroup of $ \Iso_+(\Ss^n) $, since $ \Ss^n $ cannot be given a Lie group
structure. 

In what follows let $ M^{n+1} $ be any Riemannian manifold and $
\te{M} $ be a covering space of $ M $, with the induced smooth structure,
orientation and Riemannian metric.  

\begin{lem}\label{L:ast}
	A covering map $ \pi \colon \te{M} \to M $ induces homeomorphisms $ \sr
	F_\ast(\te{M}) \to \sr F_\ast(M) $ which restrict to homeomorphisms $ \sr
	F_{\ast}(\te{M};I) \to \sr F_{\ast}(M;I) $ for all $ I $.
\end{lem}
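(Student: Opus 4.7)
The plan is to show that post-composition with $\pi$ gives the required homeomorphism. Fix $\te q \in \pi^{-1}(q)$ and define
\[
\Pi \colon \sr F_\ast(\te M) \to \sr F_\ast(M), \quad \Pi(\te f) = \pi \circ \te f.
\]
Because $\pi$ is a local diffeomorphism, $\pi \circ \te f$ is an immersion whenever $\te f$ is, and it sends $-e_{n+1}$ to $q$ by construction. Continuity of $\Pi$ in the $C^\infty$-topology is immediate, since post-composition with a fixed smooth map is $C^\infty$-continuous.

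For bijectivity, I would invoke the standard lifting criterion: since $n \geq 2$, $\Ss^n$ is simply-connected, so every basepoint-preserving map $f \colon (\Ss^n,-e_{n+1}) \to (M,q)$ admits a unique continuous lift $\te f \colon (\Ss^n,-e_{n+1}) \to (\te M,\te q)$. Because $\pi$ is a local diffeomorphism, this lift is automatically smooth and an immersion whenever $f$ is, so $\te f \in \sr F_\ast(\te M)$, and uniqueness yields injectivity. Continuity of the inverse $\Pi^{-1}$ is the main technical point. Fix $f_0 \in \sr F_\ast(M)$ with lift $\te f_0$. Using compactness of $\Ss^n$, choose a finite open cover $U_1,\dots,U_k$ of $\Ss^n$ such that each $\te f_0(\overline{U_i})$ is contained in an open set $\te V_i \subseteq \te M$ on which $\pi$ restricts to a diffeomorphism onto $V_i := \pi(\te V_i)$, and let $\sig_i = (\pi|_{\te V_i})^{-1}$. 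For $f$ sufficiently $C^0$-close to $f_0$ we have $f(U_i) \subseteq V_i$, and a connectedness argument anchored at the basepoint shows that the lift $\te f$ of $f$ satisfies $\te f|_{U_i} = \sig_i \circ f|_{U_i}$: the basepoint pins down the sheet containing $\te f_0(-e_{n+1}) = \te q$, and one propagates this choice by $C^0$-proximity through overlapping $U_i$'s. Since post-composition with each smooth $\sig_i$ is $C^\infty$-continuous, so is $f \mapsto \te f$.

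Finally, restriction to $\sr F_\ast(\,\cdot\,;I)$ is automatic: $\pi$ is a local isometry, so $\te f$ and $\pi \circ \te f$ induce the same Riemannian metric on $\Ss^n$ and have identical shape operators and Gauss maps at every point, hence the same principal curvatures. Thus $\Pi$ restricts to a homeomorphism $\sr F_\ast(\te M;I) \to \sr F_\ast(M;I)$ for every interval $I$. The one step to watch is the sheet-selection in the continuity argument, but simple-connectivity of $\Ss^n$ together with the fixed basepoint determines a unique global lift, so the argument is essentially mechanical once the local cover is set up.
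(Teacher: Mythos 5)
Your proof is correct and follows the same approach as the paper: post-composition with $\pi$ is the homeomorphism, its inverse is lifting via simple-connectedness of $\Ss^n$, and the restriction to $\sr F_\ast(\cdot;I)$ works because $\pi$ is a local isometry and hence preserves principal curvatures. The paper's proof is essentially a one-liner and leaves the continuity of the lifting map implicit; your explicit sheet-selection argument fills in that detail but adds nothing conceptually new.
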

\begin{proof}
	Let $ q \in M $ be the basepoint used to define $ \sr F_{\ast}(M) $ and let
	$ \te{q} \in \pi^{-1}(q)$ be arbitrary. Then we have an induced map $ \sr
	F_\ast(\te{M}) \to \sr F_\ast(M) $ given by post-composition with $ \pi $,
	whose inverse is given by lifting elements of $ \sr F_{\ast}(M) $ to $
	(\te{M},\te{q})	$. Because $ \pi $ is an orientation-preserving local
	isometry, it preserves principal curvatures.
\end{proof}

\begin{crl}\label{C:independent}
	Let $ M $ be a space form. Then different choices of basepoints for the
	definition of $ \sr F_\ast(M;I) $ yield homeomorphic spaces.
\end{crl}
\begin{proof}
	Immediate from \lref{L:independent} and \lref{L:ast}, since the group of
	orientation-preserving isometries of the universal cover of $ M $ is
	transitive on points.
\end{proof}

\begin{lem}\label{L:covering}
	Let $ M $ be any Riemannian manifold and $ \pi \colon \te{M} \to M $ be a
	covering map. Then 
	\begin{equation}\label{E:Pi}
		\Pi \colon \sr F(\te{M}) \to \sr F(M),\quad \te{f} \mapsto \pi \circ
		\te{f}  
	\end{equation}
	restricts to covering maps $ \sr F(\te{M};I) \to \sr
	F(M;I) $ for all intervals $ I $.  If $ \pi $ is regular, then
	so is $ \Pi $, and the corresponding groups of deck transformations are
	naturally isomorphic.
\end{lem}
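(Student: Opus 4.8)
The plan is to exhibit $\Pi$ as a covering map by producing, around each $f \in \sr F(M;I)$, an evenly covered neighborhood built from the covering structure of $\pi \colon \te M \to M$ itself. First I would recall the basic lifting fact for immersions: since $\Ss^n$ is simply-connected (here $n \ge 2$), every immersion $f \colon \Ss^n \to M$ admits a lift $\te f \colon \Ss^n \to \te M$, unique once the image of the basepoint $-e_{n+1}$ is prescribed in $\pi^{-1}(f(-e_{n+1}))$. Thus the fiber $\Pi^{-1}(f)$ is in natural bijection with $\pi^{-1}(f(-e_{n+1}))$, a discrete set. Because $\pi$ is a local isometry preserving orientation, a lift $\te f$ has the same induced metric, Gauss map and shape operator as $f$ locally, so $\te f \in \sr F(\te M; I)$ precisely when $f \in \sr F(M; I)$; this shows $\Pi$ maps $\sr F(\te M; I)$ onto $\sr F(M; I)$ and is fiberwise as described.

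Next I would construct the local trivialization. Fix $f \in \sr F(M;I)$ and let $q = f(-e_{n+1})$, with evenly covered neighborhood $U \ni q$ in $M$, so $\pi^{-1}(U) = \bigsqcup_{\alpha} \te U_\alpha$ with $\pi|_{\te U_\alpha} \colon \te U_\alpha \to U$ a diffeomorphism. The set $\sr V = \{\, g \in \sr F(M;I) : g(-e_{n+1}) \in U \,\}$ is an open neighborhood of $f$ in the $C^\infty$-topology (evaluation at a point is continuous). For $g \in \sr V$ and each index $\alpha$ with $g(-e_{n+1})$ in the relevant sheet, the unique lift $\te g$ with $\te g(-e_{n+1}) \in \te U_\alpha$ lies in $\sr F(\te M;I)$, giving a map $\sr V \times \pi^{-1}(q) \to \Pi^{-1}(\sr V)$. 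I would check this is a homeomorphism: continuity of $g \mapsto \te g$ in the $C^\infty$-topology follows because lifting is a local operation (locally $\te g = (\pi|_{\te U_\alpha})^{-1} \circ g$ on a neighborhood of the basepoint, and then one propagates the lift, which depends continuously on $g$ by the uniqueness of path-lifting applied smoothly); the inverse is just restriction of $\Pi$ together with recording which sheet contains $\te g(-e_{n+1})$. This realizes $\sr V$ as an evenly covered neighborhood, so $\Pi$ restricted to $\sr F(\te M;I)$ is a covering map.

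For the regularity statement: suppose $\pi$ is regular with deck group $G = \Deck(\pi)$, acting on $\te M$ by orientation-preserving isometries. Then $G$ acts on $\sr F(\te M)$ by post-composition, $\gamma \cdot \te f = \gamma \circ \te f$, and this action preserves $\sr F(\te M;I)$ since deck transformations are isometries (hence preserve principal curvatures). This action is by deck transformations of $\Pi$: $\Pi(\gamma \circ \te f) = \pi \circ \gamma \circ \te f = \pi \circ \te f = \Pi(\te f)$. It is free and transitive on fibers: on the fiber over $f$, identified with $\pi^{-1}(q)$, the $G$-action corresponds to the (free, transitive because $\pi$ is regular) action of $G$ on $\pi^{-1}(q)$. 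Hence every deck transformation of $\Pi$ arises this way — a deck transformation of $\Pi$ is determined by its effect on one fiber, and that effect is realized by a unique $\gamma \in G$, after which the two agree everywhere by the uniqueness of lifts. This gives the natural isomorphism $\Deck(\Pi) \cong \Deck(\pi) = G$, and in particular $\Pi$ is regular.

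The main obstacle I anticipate is the continuity of the lifting map $g \mapsto \te g$ in the $C^\infty$-topology, and more precisely checking that the local trivialization above is genuinely a homeomorphism rather than merely a continuous bijection. The cleanest route is to observe that lifting is determined locally: near any point of $\Ss^n$ one has $\te g = s_\alpha \circ g$ for a suitable local section $s_\alpha$ of $\pi$, and these local formulas patch because $\Ss^n$ is connected and the lift is pinned down at the basepoint; since each $s_\alpha$ is smooth, $g \mapsto \te g$ is continuous in every $C^k$ seminorm, and likewise for the inverse. Everything else is a formal consequence of the covering-space theory of $\pi$ transported through the evaluation map $g \mapsto g(-e_{n+1})$.
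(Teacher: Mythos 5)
Your proposal is correct and follows essentially the same route as the paper: both construct evenly covered neighborhoods of an immersion $f$ by pulling back an evenly covered neighborhood of the basepoint image $f(-e_{n+1})$ under the evaluation map, identify the sheets of $\Pi$ over such a neighborhood with the sheets of $\pi$, observe that $\pi$ being a local isometry preserves principal curvatures, and for regularity define the isomorphism $\mathrm{Aut}(\pi)\to\mathrm{Aut}(\Pi)$ by post-composition $\gamma\mapsto(\tilde f\mapsto\gamma\circ\tilde f)$, with surjectivity following from transitivity of $\mathrm{Aut}(\pi)$ on fibers of $\pi$ together with uniqueness of lifts. The only difference is that you spell out the continuity of the lifting map $g\mapsto\tilde g$ in the $C^\infty$-topology, which the paper leaves implicit; this is a reasonable addition.
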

\begin{proof}
	Fix $ f \in \sr F(M) $ and let $ p = f(-e_{n+1}) $.
	Let $ U $ be an evenly covered neighborhood of $ p $, with $ \pi^{-1}(U) =
	\Du_{\al \in I}\te{U}_{\al} $.   Define a neighborhood $ \sr U $ of $ f $ in
	$ \sr F(M) $ by 
	\begin{equation*}
		\sr U = \set{ h \in \sr F(M)}{h(-e_{n+1}) \in U}.
	\end{equation*}
	Then
	\begin{equation*}
		\Pi^{-1}(\sr U) = \Du_{\al \in I}{\te{\sr U}_\al},\quad \text{where}
		\quad \te{\sr U}_\al = \set{\te{h} \in \sr F(\te{M})}{\te{h}(-e_{n+1})
		\in \te{U}_\al}.
	\end{equation*}
	Moreover, $ \Pi|_{\te{\sr U}_{\al}} \colon \te{\sr U}_\al \to \sr U $ is a
	homeomorphism for each $ \al \in I $. Its inverse maps an arbitrary
	immersion $ h \in \sr U $ to its lift
	\begin{equation*}
		\te{h}_\al \colon (\Ss^n,-e_{n+1}) \to
		\Big(\te{M}\,,\,\big(\pi|_{\te{U}_\al}\big)^{-1}
		\big(h(-e_{n+1})\big)\Big) \qquad (\al \in I).
	\end{equation*}
	In addition, $ \Pi $ restricts to covering maps $ \sr F(\te{M};I) \to \sr
	F(M;I) $ since it preserves principal curvatures.

	Now suppose that $ \pi $ is regular, i.e., that the deck transformation
	group $ \Aut(\pi) $ acts simply transitively on each fiber $ \pi^{-1}(p) $
	($ p \in M$). Define
	\begin{equation*}
		\phi \colon \Aut(\pi) \to \Aut(\Pi),\quad \ga \mapsto \ga_*, \quad
		\text{where} \quad \ga_*\colon \sr F(\te{M}) \to \sr F(\te{M}),\quad
		\te{f} \mapsto \ga \circ \te{f}.
	\end{equation*}
	Then $ \phi $ is a group monomorphism. Given two lifts $
	\te{f} $, $ \te{f}' $  of $ f \in \sr F(M) $, there exists $ \ga \in
	\Aut(\pi) $ such that $ \te{f}'(-e_{n+1}) = \ga \circ \te{f}(-e_{n+1}) $, so
	that $ \te{f}' = \ga_*\big(\te{f}\big) $ by uniqueness of lifts. This
	implies that $ \phi $ is surjective, hence an isomorphism.
\end{proof}

We are finally ready to prove the main theorem \tref{T:main}.

\begin{proof}[Proof of \tref{T:main}]
	Let $ \te{f} \colon \Ss^n \to \te{M}^{n+1} $ be any lift of $ f $. Let
	\begin{equation*}
		\te{\Psi} \colon \Diff_+(\Ss^n) \to \sr F(\te{M};I),\quad \te{\Psi}(g) =
		\te{f} \circ g.
	\end{equation*}
	Then in the following commutative diagram:
	\begin{equation*}
		\begin{tikzcd}
			\Diff_+(\Ss^n) \arrow{r}{\te{\Psi}} \arrow{dr}[swap]{\Psi} & \sr
			F(\te{M};I) \arrow{d}{\Pi} \\
			& \sr F(M;I)
		\end{tikzcd}
	\end{equation*}
	the map $ \te{\Psi} $ is a weak homotopy equivalence by \pref{P:mainh},
	\pref{P:mainh2} and \pref{P:main}, while $ \Pi $ is a regular covering map
	with covering group isomorphic to $ \pi_1(M) $ by \lref{L:covering}.
\end{proof}

The assertion about $ \pi_0 $ can be paraphrased as follows when $
I=(0,+\infty) $ or $ I= (0,+\infty) $

\begin{crl}\label{C:pi0}
	Let $ M^{n+1} $ be space form of nonpositive curvature and $ f,\,g \colon
	\Ss^n \to M $ be immersions whose principal curvatures are either both
	positive or both negative everywhere. Then $ f $ and $ g $ are homotopic
	through immersions of nonvanishing Gaussian curvature if and only if the
	(visual) Gauss maps of their lifts to $ \te{M} $ are isotopic as
	diffeomorphisms of $ \Ss^n $.
	\qed
\end{crl}

In the situation of \cref{C:pi0}, note that if $ n $ is odd, then the sign of
the principal curvatures of $ f $ is the same as that of $ K_f $. However, if $
n $ is even, then it can occur that $ f,\,g \colon \Ss^n \to M^{n+1} $ are not
homotopic through immersions of nonvanishing Gaussian curvature even though $
K_f = K_g \neq 0 $ is a constant and $ \nu_f $, $ \nu_g $ (or $ \hat{\nu}_f
$, $ \hat{\nu}_g $) are isotopic, because their principal curvatures have
opposite signs. An example is obtained by taking $ f $ to be the inclusion $
\Ss^n \inc \E^{n+1} $ and $ g $ as its composition with the antipodal map
(compare \eref{E:conf}).

%
	%




\vspace{12pt} \noindent{\small 
\tsc{Departamento de Matem\'atica, Universidade de Bras\'ilia (\ltsc{unb}) \\
Campus Darcy Ribeiro, 70910-900 -- Bras\'ilia, DF, Brazil}} \\
\vspace{-10pt}
\newcommand*{\emailimg}[1]{%
	\raisebox{12pt}{%
    \includegraphics[
		height=12pt,
      keepaspectratio,
    ]{#1}%
  }%
}

\noindent \raisebox{15pt}{\tit{E-mail address:}} \emailimg{email}

\end{document}